\definecolor{old}{rgb}{0.6.0.4,0.3}
\definecolor{newcol1}{rgb}{0.12,0.487,0.14}
\definecolor{newcol2}{rgb}{0.5,0.7,0.5}
 \definecolor{col1}{rgb}{0.7, 0.86, 0.96}
\definecolor{col2}{rgb}{0.95, 0.95, 0.65}
\definecolor{col3}{rgb}{0.85, 1.00, 0.8}
\definecolor{texted}{rgb}{0.1, 0.6, 0.3}
\newtheorem{thm}{Theorem}[section]
\newtheorem{lemma}[thm]{Lemma}
\newtheorem{df}[thm]{Definition}
\newtheorem{conj}[thm]{Conjecture}
\theoremstyle{definition}
\newtheorem{example}[thm]{Example}
\newtheorem{rem}[thm]{Remark}
\def\bx{\mathbf{x}}
\def\by{\mathbf{y}}
\def\bb{\mathbf{b}}
\def\bk{\mathbf{k}}
\def\bpi{\bm{\pi}}
\def\bsig{\bm{\sigma}}
\def\bj{\mathbf{j}}
\definecolor{gurot}{RGB}{180,20,20}
\definecolor{jorot}{RGB}{220,20,20}
\definecolor{darkorange}{rgb}{1.0, 0.55, 0.0}
\definecolor{folly}{rgb}{1.0, 0.0, 0.31}
\begin{document}


\title[]{Multitype $\Lambda$-coalescents}



\author{Johnston, S. G. G., Kyprianou, A. and Rogers, T.}

\maketitle

\begin{abstract}
Consider a multitype coalescent process in which each block has a colour in $\{1,\ldots,d\}$. Individual blocks may change colour, and some number of blocks of various colours may merge to form a new block of some colour. We show that if the law of a multitype coalescent process is invariant under permutations of blocks of the same colour, has consistent Markovian projections, and has asynchronous mergers, then it is a multitype $\Lambda$-coalescent: a process in which single blocks may change colour, two blocks of like colour may merge to form a single block of that colour, or large mergers across various colours happen at rates governed by a $d$-tuple of measures on $[0,1]^d$. We go on to identify when such processes come down from infinity. Our framework generalises Pitman's celebrated classification theorem for singletype coalescent processes, and provides a unifying setting for numerous examples that have appeared in the literature, including the seed-bank model, the island model, and the coalescent structure of continuous-state branching processes.

\medspace
\vskip 1mm
\noindent{\bf Keywords}. {$\Lambda$-coalescents, exchangeability, consistency, coming down from infinity.}\\
{\bf MSC}. Primary 60G09; Secondary 60J99.
\end{abstract}

\section{Introduction and statements of main results}

\subsection{Motivation}
Coalescent processes follow the formation and coagulation of clusters (referred to as ``blocks"), as governed by simple probabilistic rules. Notwithstanding interesting applications in physics and chemistry, they are best known for their time-reversed form which describes the construction of genealogical trees of evolving populations. The most important questions about these processes concern their behaviour at early times when initiated with a large (or infinite) number of blocks. This setting corresponds to looking far back in time in the genealogy of a large population, and results obtained there have important consequences for applied scientists in the field of population genetics. Of particular interest is the question of whether or not a coalescent process may ``come down from infinity", meaning that it almost surely has a finite number of blocks after any positive time.

To aid the development of a robust mathematical theory of these processes issued from infinity, certain properties are required. Individual elements should behave identically: this is codified in the concept of \emph{exchangeability}, which states that the process should be invariant under permutations of block labels. The rules should be well-behaved across different scales: a \emph{consistent} process is one in which each restriction to a finite number of elements is itself a Markov chain. Multiple events should not happen simultaneously (though this last criterion can be relaxed if one is careful \cite{schweinsberg2000coalescents}).

Famously, Pitman \cite{pitman1999coalescents} proved that these requirements together uniquely specify a class of processes known as $\Lambda$-coalescents (which were incidentally discovered simultaneously by Sagitov \cite{sagitov1999general}).
These are characterised by a measure $\Lambda$ on the unit interval, with the rule that at any moment in which a restriction of the process to $\{1,\ldots,n\}$ has $b$ blocks, any $k$ of these blocks are merging to form a single new block at rate
\begin{align} \label{eq:merger rates}
\lambda_{b,k} := \int_{[0,1]} s^{k-2} (1-s)^{b-k} \Lambda( \mathrm{d}s).
\end{align}
The majority of previously-studied coalescent processes, including those of Kingman  \cite{kingman1982coalescent} and Bolthausen and Sznitman \cite{bolthausen1998ruelle}, fall into this class. The existence of this framework has enabled the discovery of powerful general results. Important examples include the establishment of a simple criterion for exactly when a $\Lambda$-coalescent is able to come down from infinity \cite{schweinsberg2000necessary}, and even its asymptotic rate of descent \cite{berestycki2010lambda}.

The $\Lambda$-coalescent processes have no restrictions on which blocks may coalesce. In genealogical terms, they describe populations assumed to occupy a geographic region which is sufficiently small and well-connected. This assumption is too simplistic to properly capture the evolution of natural populations, however, which typically have been geographically separated for periods long enough to affect the shape of their ancestral trees. To account for this it is necessary to consider models of multiple coalescing populations. A large body of work exists under the heading of \emph{metapopulation} coalescent models, almost all of which is concerned with the limit in which the number of types is large. The ``standard structured coalescent" of Notohara \cite{notohara1990coalescent} extends Kingman's coalescent to a metapopulation setting, and establishes duality with a corresponding forward population dynamical model. Another interesting example of a structured coalescent is the recently proposed seed-bank model \cite{blath2016new}, where blocks come in two types, only one of which undergoes mergers. 

Amongst this diverse body of work, the high-level questions of how the requirements of consistency and exchangeability constrain the range of possible models and behaviours have so far gone unanswered. 
In this article we answer these questions, providing a unifying setting for such processes with multiple types in the framework of \emph{multitype $\Lambda$-coalescents}, a class of coalescent processes taking values in the collection of \emph{$d$-type partitions} of a set:

\begin{df} \label{df:dpart}
Let $S$ be a countable set. A $d$-type partition of $S$ is a $d$-tuple $\bpi = (\pi_1,\ldots,\pi_d)$ of disjoint collections of subsets of $S$ such that the union $\hat{\bpi} = \pi_1 \cup \ldots \cup \pi_d$ is a partition of $S$. We call $\hat{\bpi}$ the underlying partition of $\bpi$.
\end{df}

Clearly each block $\Gamma$ contained in the underlying partition $\hat{\bpi}$ of $\bpi$ is contained in exactly one of the collections $\pi_i$ for some $i$; in this case we refer to $i$ as the colour or type of the block $\Gamma$. (We use `type' and `colour' interchangeably throughout the article.) It is easily verified that whenever $T$ is a subset of $S$, a $d$-type partition on $S$ may be projected onto $T$ to induce a $d$-type partition on $T$. Throughout the paper, when possible we take an intrinsic approach in that we are not concerned with the underlying set $S$, but rather the number of blocks of each colour. With the $d$-type partitions of a set $S$ defined, we now offer a rather broad definition of $d$-type coalescent processes.

\begin{df} \label{df:coal}
A $d$-type coalescent process is a stochastic processes $(\bpi(t))_{t \geq 0}$ taking values in the set of $d$-type partitions of a set $S$ with the property that the blocks of the underlying partition process $(\hat{\bpi}(t))_{t \geq 0}$ merge as time passes.
\end{df}
In other words, the underlying partition process of a $d$-type coalescent process is a single-type coalescent process. 

\color{black}
We now introduce the multitype $\Lambda$-coalescents. Whenever $\mathbf{j} = (j_1,\ldots,j_d)$ is a multiindex, we use the term $\mathbf{j}$ blocks to refer to a collection of blocks containing exactly $j_i$ blocks of type $i$.  We write $\mathbf{a} \leq \mathbf{b}$ if $a_i \leq b_i$ for each $1 \leq i \leq d$. We write $\mathbf{a} < \mathbf{b}$ if $\mathbf{a} \leq \mathbf{b}$ and $\mathbf{a} \neq \mathbf{b}$. \color{black}

\begin{df}[The multitype $\Lambda$-coalescents] \label{def:main}
Write $\Lambda = \left(\rho_{ii \to i}, \rho_{ j \to i}, Q_{\to i} : i,j \in \{1,\ldots,d\} \right)$, where $\rho_{ii \to i},\rho_{j \to i} \geq 0$  and  each $Q_{\to i}$ is a measure on the unit cube $[0,1]^d$ satisfying the integrability condition 
\begin{align} \label{eq:integrability}
\int_{[0,1]^d} \left( s_i^2 + \sum_{ j \neq i} s_j \right) Q_{\to i} (\mathrm{d} s) < \infty.
\end{align}
The multitype $\Lambda$-coalescent is the coalescent process taking values in the set of $d$-type partitions of a countable set $S$ and governed by the following rule for  non-zero $\mathbf{k} \leq \mathbf{b}$: \color{black} when a projection of the process onto a subset $T$ of $S$ has $\mathbf{b}$ blocks, any $\mathbf{k}$ of these blocks are merging to form a single block of type $i$ at rate
\begin{align} \label{eq:merger rates d}
\lambda_{\mathbf{b},\mathbf{k} \to i} = \sum_{j \neq i} \mathrm{1}_{ \mathbf{k} = \mathbf{e}_j } \rho_{j \to i} + \mathrm{1}_{ \mathbf{k} = 2 \mathbf{e}_i } \rho_{ii \to i} + \int_{[0,1]^d} s^{\mathbf{k}}(1 - s)^{\mathbf{b} - \mathbf{k}} Q_{ \to i} ( \mathrm{d} s),
\end{align}
where for multiindices $\mathbf{k} \leq \mathbf{b}$ and $s \in [0,1]^d$, $s^{\mathbf{k}}(1 - s)^{\mathbf{b} - \mathbf{k}} := \prod_{ j = 1}^d s_j^{k_j} ( 1 - s_j)^{ b_j - k_j }$.
\end{df}

We now anatomise the integral formula \eqref{eq:merger rates d} for the rate $\lambda_{\mathbf{b},\mathbf{k} \to i}$ at which a collection of $\mathbf{k}$ of $\mathbf{b}$
blocks merge to form a single block of type $i$:
\begin{itemize}
\item Any individual block of type $j$ changes to type $i$ at rate $\rho_{j \to i}$. These changes can be thought of as mutations.
\item Any pair of blocks of type $i$ merge to form a single block of type $i$ at rate $\rho_{ii \to i}$.
\item At rate $Q_{\to i}(\mathrm{d}s)$ a merger event of type $i$ and involvement probabilities $s = (s_1,\ldots,s_d)$ occurs. At this event, a proportion of all blocks in the process across various types merge to form a single block of type $i$ according to the following rule: independently each block of some type $j$ elects to be involved in the merger with probability $s_j$, or not be involved with probability $1 - s_j$. 
\end{itemize}

We make a brief clarification on the large merger events:
\begin{rem} \label{rem:create}
Suppose at some moment there are $\mathbf{b}$ blocks in the system and a merger event of type $i$ occurs with involvement probabilities $s = (s_1,\ldots,s_d)$. Then for nonzero $\mathbf{k}\leq \bb$, the probability that $\mathbf{k}$ blocks are involved in the merger is given by $\prod_{j = 1}^d \binom{b_i}{k_i} s_i^{k_i}(1-s_i)^{b_i-k_i}$; after this merger there are $\mathbf{b}-\mathbf{k} + \mathbf{e}_i$ blocks in the system. 

We now clarify the situation in the $\bk = 0$ setting. When there are $\mathbf{b}$ blocks in the system, and a type $i$ merger occurs with involvement probabilities $(s_1,\ldots,s_d)$, there is a $\prod_{ j = 1}^d (1 - s_j)^{b_j}$ probability that \emph{none} of the $\bb$ blocks are involved in this merger, i.e.\ the merger is empty. We emphasise that at such an event \emph{no new type $i$ block is created during this `empty' merger}, so that after an empty merger of type $i$ there are still $\mathbf{b}$ (rather than $\mathbf{b}+\mathbf{e}_i$) blocks.
\end{rem}
\color{black}

Let us take a moment to reconnect the multitype $\Lambda$-coalescents with the single-type $\Lambda$-coalescents introduced by Pitman \cite{pitman1999coalescents} and Sagitov \cite{sagitov1999general}. Indeed, every measure $\Lambda$ on $[0,1]$ may be unpacked via a unique representation
\begin{align} \label{eq:decomposition}
\Lambda( \mathrm{d} s ) = \rho \delta_0 (\mathrm{d} s ) + s^{2} Q( \mathrm{d} s),
\end{align}
where $\rho \geq 0$, $\delta_0$ is the Dirac mass at zero, and $Q( \mathrm{d} s)$ is a measure on $[0,1]$ not charging zero (i.e. $Q( \{0\} ) = 0$). This representation induces an alternative representation to \eqref{eq:merger rates}
\begin{align} \label{eq:newrep}
\lambda_{b,k} = \rho \mathrm{1}_{ k= 2 } + \int_0^1 s^k (1-s)^{b-k} Q(\mathrm{d}s).
\end{align}
The equation \eqref{eq:newrep} is simply the single-type special case of \eqref{eq:merger rates d}.

In the next section we will give our main result, which states that any $d$-type coalescent process enjoying the properties of \emph{exchangeability}, \emph{consistency} and \emph{asynchronous mergers} may be realised as a $d$-type $\Lambda$-coalescent. 
\subsection{The classification theorem and recursions on $\mathbb{Z}_{ \geq 0}^d$} \label{sec:classification}
Our main result classifies the set of coalescent processes that are \emph{exchangeable}, \emph{consistent} and \emph{asynchronous}. We now take a brief moment to outline each of these concepts. 
We give a full description of multitype exchangeability in Section \ref{sec:exch}, but let us just say here that in essence, a $d$-type coalescent process is \emph{exchangeable} if its law is invariant under permutations of blocks of the same colour. 
Turning to consistency, we noted above that every $d$-type partition on a set $S$ induces a $d$-type partition on every subset $T$ of $S$. In particular, we say a $d$-type coalescent process on $S$ is \emph{consistent} if every projection of the process onto a subset $T$ of $S$ is a Markov process in its own filtration. Finally, we say a $d$-type $\Lambda$-coalescent is \emph{asynchronous}, or more explicitly, has \emph{asynchronous mergers}, if there are (almost-surely) no instances in which two separate collections of blocks merge to form two new blocks simultaneously. In other words, asynchronous means there are no simultaneous multiple mergers.

We now state our main result:
\begin{thm} \label{thm:main}
A $d$-type coalescent process is exchangeable, consistent and has asynchronous mergers if and only if it is a $d$-type $\Lambda$-coalescent.
\end{thm}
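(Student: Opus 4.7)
The plan is to argue the two directions separately.

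\emph{Forward direction.} For a multitype $\Lambda$-coalescent, exchangeability is built into Definition~\ref{def:main} since the rates $\lambda_{\mathbf{b}, \mathbf{k} \to i}$ depend only on the type-count vectors. Asynchrony follows because the total jump rate on each finite projection is finite (by the integrability condition \eqref{eq:integrability}), so mergers realise as a point process with no double points almost surely. Consistency reduces to the projection identity
$$\lambda_{\mathbf{b}, \mathbf{k} \to i} \;=\; \lambda_{\mathbf{b}+\mathbf{e}_j, \mathbf{k} \to i} + \lambda_{\mathbf{b}+\mathbf{e}_j, \mathbf{k}+\mathbf{e}_j \to i}, \qquad j \in \{1,\ldots,d\},$$
which follows by inserting $(1-s_j)+s_j = 1$ in the $Q_{\to i}$-integral and by direct inspection of the atomic $\rho$-terms in \eqref{eq:merger rates d}.

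\emph{Reverse direction.} Given a coalescent with the three properties, consistency makes every finite projection a continuous-time Markov chain; exchangeability combined with asynchrony forces each jump to be the merger of a single multiindex $\mathbf{k}$ of blocks into one block of some colour $i$, with rate depending only on $(\mathbf{b}, \mathbf{k}, i)$. Call this rate $\lambda_{\mathbf{b}, \mathbf{k} \to i}$. Consistency between projections of sizes $|T|+1$ and $|T|$ gives exactly the recursion above. The core analytic task is then to classify the non-negative arrays satisfying this recursion. For each fixed $i$, the family $(\lambda_{\mathbf{b}, \mathbf{k} \to i})$ obeys a multidimensional complete-monotonicity condition, and a $d$-dimensional Hausdorff moment argument produces a Borel representing measure $\mu_i$ on $[0,1]^d$ with
$$\lambda_{\mathbf{b}, \mathbf{k} \to i} \;=\; \int_{[0,1]^d} s^{\mathbf{k}}(1-s)^{\mathbf{b}-\mathbf{k}} \mu_i(\mathrm{d}s),$$
valid at least for $\mathbf{k}$ corresponding to genuine mergers. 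Decomposing $\mu_i$ along the coordinate hyperplanes then extracts the discrete parameters $\rho_{ii \to i}$ (from the boundary behaviour near $2\mathbf{e}_i$-type events) and $\rho_{j \to i}$ (from the behaviour along the axes), while the residual measure on $[0,1]^d$ provides $Q_{\to i}$. The integrability condition \eqref{eq:integrability} then corresponds precisely to finiteness of all the rates $\lambda_{\mathbf{b}, \mathbf{k} \to i}$. Kolmogorov extension finally assembles the full process on $S$ from its consistent family of finite projections.

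\emph{Main obstacle.} The heart of the proof is the multidimensional Hausdorff moment step and the identification of its boundary atoms with the finite-dimensional $\rho$-parameters. In Pitman's one-dimensional argument the Kingman and $\Lambda$-parts arise from a single measure on $[0,1]$ because the singular set reduces to $\{0\}$. In the multidimensional setting there are many coordinate hyperplanes on which $\mu_i$ could concentrate, and one must show that the combination of the recursion with finiteness of all rates forces the singular behaviour to consist of exactly two families: atoms producing single-block colour changes (contributing $\rho_{j\to i}$) and atoms producing binary same-colour mergers (contributing $\rho_{ii\to i}$). Producing this boundary decomposition, and verifying uniqueness of the resulting $Q_{\to i}$, is the technical crux of the argument.
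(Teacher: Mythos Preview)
Your high-level architecture matches the paper's: both directions, with the reverse direction proceeding via the consistency recursion \eqref{eq:recursion1b} followed by a moment-type classification. The forward direction and the derivation of the recursion are fine and essentially as in the paper.

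The gap is in your proposed moment step. You assert that a $d$-dimensional Hausdorff argument yields a single measure $\mu_i$ on $[0,1]^d$ with
\[
\lambda_{\mathbf{b},\mathbf{k}\to i} \;=\; \int_{[0,1]^d} s^{\mathbf{k}}(1-s)^{\mathbf{b}-\mathbf{k}}\,\mu_i(\mathrm{d}s),
\]
and that the $\rho$-parameters then emerge from atoms of $\mu_i$ along coordinate hyperplanes. Neither claim holds as stated. First, the array $(\lambda_{\mathbf{b},\mathbf{k}\to i})$ is only defined for $\mathbf{k}$ in $\mathbb{Z}_{\ge 0}^d\setminus\{\mathbf{0},\mathbf{e}_i\}$; the standard Hausdorff theorem needs the full index set, so it does not apply directly. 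Second, no (finite or $\sigma$-finite) measure in the displayed form can reproduce the $\rho$-terms: for instance, any contribution of $\mu_i$ to $\int s^{\mathbf{e}_j}(1-s)^{\mathbf{b}-\mathbf{e}_j}\mu_i(\mathrm{d}s)$ necessarily depends on $\mathbf{b}$, whereas the term $\rho_{j\to i}\ind_{\mathbf{k}=\mathbf{e}_j}$ in \eqref{eq:merger rates d} is constant in $\mathbf{b}$. (In Pitman's one-type case this issue is hidden by the normalisation $s^{k-2}$ in \eqref{eq:merger rates}, which has no canonical multitype analogue.) So the $\rho$'s are genuinely extra degrees of freedom, not boundary atoms of a global representing measure.

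The paper handles this via Theorem~\ref{thm:algebraic}: it observes that the missing index set is the box $B_{\mathbf{e}_i}=\{\mathbf{0},\mathbf{e}_i\}$ with minimal complementary elements $\Gamma_{\mathbf{e}_i}=\{2\mathbf{e}_i\}\cup\{\mathbf{e}_j:j\neq i\}$, translates the array by each such $\bx$ to obtain arrays defined on all of $\mathbb{Z}_{\ge 0}^d$, applies de~Finetti for separately exchangeable $\{0,1\}$-sequences to each translate to get a pair $(\rho_{\bx},J^{\bx})$, and then uses the overlaps among translates (via the moment-determinacy of measures on $[0,1]^d$) to glue the $J^{\bx}$ into a single $Q_{\to i}$. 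The constants $\rho_{\bx}$ are exactly the $\rho_{ii\to i}$ and $\rho_{j\to i}$. This translation-and-gluing mechanism is the missing idea in your sketch.
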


Theorem \ref{thm:main} is a generalisation of the main result \cite[Theorem 1]{pitman1999coalescents} of Pitman's original work on single-type $\Lambda$-coalescents, and our proof begins with analogous ideas from the de-Finetti theory. Namely, Pitman makes the crucial observation that any single-type exchangable and consistent coalescent process with asynchronous mergers must have the property that the merger rates satisfy the recursion
\begin{align} \label{eq:recursion}
\lambda_{ b,k } = \lambda_{ b+1,k}+ \lambda_{b+1, k+1} \qquad \text{for } 2 \leq k \leq b,
\end{align}
thereby reducing the problem to classifying the sets of non-negative real numbers $\left( \lambda_{b,k} : 2 \leq k \leq b \right)$ satisfying the relation \eqref{eq:recursion}. Pitman transforms the variables $\lambda_{b,k}$ in such a way that they can be subjected to de Finetti theory, finding along the way that the boundary value leads to an extra degree of freedom which appears as the $\rho$ part of the $\Lambda = (\rho, Q)$ decomposition. 

As in the single-type setting, we are also able to use a consistency argument in the multitype case to reduce the proof of Theorem \ref{thm:main} to a purely algebraic problem. Outlining our approach here, recall that 
$\lambda_{\bb,\bk \to i}$ is the rate at which, when a projection of the process has $\bb$ blocks, any $\bk$ blocks are merging to form a single block of type $i$. 
In Section \ref{sec:main proof} we consider how projections of a consistent $d$-type coalescent process onto different subsets $T$ and $T'$ of $S$ witness the same merger, and use the consistency of the process to infer that the rates $\lambda_{\bb, \bk \to i}$ satisfy the multitype analogue 
\begin{align} \label{eq:recursion1b}
\lambda_{\mathbf{b},\mathbf{k} \to i} = \lambda_{\mathbf{b} + \mathbf{e}_j,\mathbf{k} \to i}  + \lambda_{\mathbf{b}+\mathbf{e}_j,\mathbf{k}+\mathbf{e}_j \to i} \qquad j \in \{1,\ldots,d\}
\end{align}
of the recursion \eqref{eq:recursion}, where $(\mathbf{e}_1,\ldots,\mathbf{e}_d)$ is the standard basis of $\mathbb{Z}_{ \geq 0}^d$.

Our principal task in proving Theorem \ref{thm:main} is then an algebraic one: namely, to characterise the set of arrays $(\mu_{\bb,\bk} : \bk \leq \bb  \in \mathbb{Z}_{ \geq 0}^d - \{\mathbf{0},\mathbf{e}_i\} )$ indexed by $\mathbb{Z}_{\geq 0}^d - \{ \mathbf{0},\mathbf{e}_i \} $ satisfying the recursion 
\begin{align} \label{eq:recursion2}
\mu_{\mathbf{b},\mathbf{k}} = \mu_{\mathbf{b} + \mathbf{e}_j,\mathbf{k}}  + \mu_{\mathbf{b}+\mathbf{e}_j,\mathbf{k}+\mathbf{e}_j} \qquad j \in \{1,\ldots,d\}.
\end{align}
As mentioned above, this task is fairly straightforward for $d=1$, and follows from a simple translation of the array and an application of de Finetti's theorem. It transpires however that the $d \geq 2$ case is significantly more delicate, with multiple degrees of freedom arising due to the geometry of the indexing set $\mathbb{Z}_{\geq 0}^d - \{\mathbf{0},\mathbf{e}_i \}$, which contains multiple `corners'. To set up our approach here, it will be of no cost for us to work with a slightly more general class of indexing sets, the cofinite \emph{upper} sets:

\begin{df}
An upper set in $\mathbb{Z}_{ \geq 0}^d$ is a subset $\Gamma$ of $\mathbb{Z}_{ \geq 0}^d$ with the property
\begin{align*}
\bx \in \Gamma, \bx \leq \by \implies \by \in \Gamma.
\end{align*}
An upper set $\Gamma$ is \emph{cofinite} if $\mathbb{Z}_{\geq 0}^d-\Gamma$ is finite. 
An element $\bx$ of an upper set $\Gamma$ is minimal if there does not exist $\by$ in $\Gamma$ for which $\by < \bx$. We write $M$ for the minimal elements of $\Gamma$.
\end{df}
The set $\mathbb{Z}_{\geq 0}^d$ is a cofinite upper set with $\{\mathbf{0}\}$ as its single minimal element. The set $\mathbb{Z}_{ \geq 0}^d - \{ \bm{0},\mathbf{e}_i \}$ is a cofinite upper set with minimal elements $\{\mathbf{e}_1,\ldots,\mathbf{e}_{i-1},2\mathbf{e}_i,\mathbf{e}_{i+1},\ldots,\mathbf{e}_d\}$. 
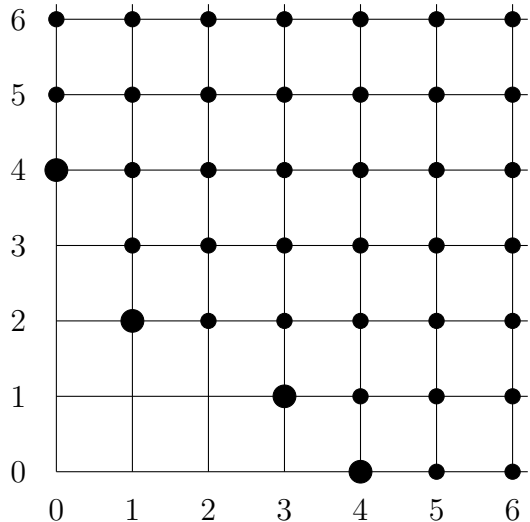
\begin{figure}[h!]
\begin{tikzpicture}
\foreach \x in {0,1,...,6}
{
\draw (\x,0) -- (\x,6.2);
\node at (\x,-0.5){\x};
}

\foreach \x in {0,1,...,6}
{
\draw (0,\x) -- (6.2,\x);
\node at (-0.5,\x){\x};
}

\foreach \x in {0,...,6}
{
\draw [fill] (\x,6) circle [radius=0.1];
}
\foreach \x in {0,...,6}
{
\draw [fill] (\x,5) circle [radius=0.1];
}
\foreach \x in {0,...,6}
{
\draw [fill] (\x,4) circle [radius=0.1];
}
\foreach \x in {1,...,6}
{
\draw [fill] (\x,3) circle [radius=0.1];
}
\foreach \x in {1,...,6}
{
\draw [fill] (\x,2) circle [radius=0.1];
}
\foreach \x in {3,...,6}
{
\draw [fill] (\x,1) circle [radius=0.1];
}
\foreach \x in {4,...,6}
{
\draw [fill] (\x,0) circle [radius=0.1];
}
\draw [fill] (0,4) circle [radius=0.15];
\draw [fill] (1,2) circle [radius=0.15];
\draw [fill] (3,1) circle [radius=0.15];
\draw [fill] (4,0) circle [radius=0.15];
\end{tikzpicture}
\caption{The dotted elements denote an upper set in $\mathbb{Z}_{\geq 0}^2$. The larger dots denote the minimal elements.}
\label{fig:covfefe}
\end{figure}

With this new definition at hand, our main algebraic result, Theorem \ref{thm:algebraic}, gives an answer to the more general problem of characterising the real-valued arrays satisfying \eqref{eq:recursion2} and doubly indexed by a cofinite upper set $\Gamma$:

\begin{thm} \label{thm:algebraic} 
If $(\mu_{\mathbf{b},\mathbf{k}} : \mathbf{k} \leq \mathbf{b} , \mathbf{k},\mathbf{b} \in \Gamma )$ is an array of non-negative real numbers satisfying the recursion \eqref{eq:recursion2} and indexed by a cofinite upper set $\Gamma$ with minimal elements $M$, then there exists a function $\rho:M \to [0,\infty)$ and a measure $J$ on $[0,1]^d$ not charging $\{0\}$ such that
\begin{align*}
\mu_{ \mathbf{b}, \mathbf{k}} = \sum_{ \mathbf{x} \in M} \mathrm{1}_{ \bk = \bx } \rho(\bx)+  \int_{[0,1]^d} s^\mathbf{k} (1 - s)^{\mathbf{b} - \mathbf{k}} J( \mathrm{d} s) \qquad \bk \leq \bb \in \Gamma.  
\end{align*}
Moreover, the measure $J$ satisfies the integrability condition
\begin{align} \label{eq:integrability2}
\int_{[0,1]^d} s^{ \bx } J(\mathrm{d} s) < \infty ~~ \qquad \text{for every $\bx$ in $\Gamma$}.
\end{align}
\end{thm}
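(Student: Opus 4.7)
The plan is to recognise the recursion~\eqref{eq:recursion2} together with the nonnegativity of $\mu$ as encoding a multidimensional Hausdorff moment condition, to extract a compatible family of representing measures $(\nu_\mathbf{k})$ on $[0,1]^d$, and then to peel off boundary atoms in order to isolate $J$ and the constants $\rho(\mathbf{x})$. Concretely, for each $\mathbf{k} \notin B_{\bm\ell}$ the sequence $\mathbf{n} \mapsto c^{(\mathbf{k})}_\mathbf{n} := \mu_{\mathbf{k}+\mathbf{n},\mathbf{k}}$ is well defined for all $\mathbf{n} \in \mathbb{Z}_{\geq 0}^d$ (since $\mathbf{k}+\mathbf{n} \notin B_{\bm\ell}$ automatically), and rewriting~\eqref{eq:recursion2} as a backward difference in $\mathbf{n}$ yields $c^{(\mathbf{k})}_\mathbf{n}-c^{(\mathbf{k})}_{\mathbf{n}+\mathbf{e}_i} = c^{(\mathbf{k}+\mathbf{e}_i)}_\mathbf{n}$. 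Iterating in all directions produces $\prod_{i=1}^d (-\Delta_i)^{r_i} c^{(\mathbf{k})}_\mathbf{n} = \mu_{\mathbf{k}+\mathbf{n}+\mathbf{r},\,\mathbf{k}+\mathbf{r}} \geq 0$, so all mixed backward differences are nonnegative. The multidimensional Hausdorff moment theorem on the cube then produces a unique finite Borel measure $\nu_\mathbf{k}$ on $[0,1]^d$ with $c^{(\mathbf{k})}_\mathbf{n} = \int_{[0,1]^d} u^\mathbf{n}\,\nu_\mathbf{k}(\mathrm{d}u)$.

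Comparing moments and invoking uniqueness, the recursion is equivalent to the compatibility relation $\nu_{\mathbf{k}+\mathbf{e}_i}(\mathrm{d}u) = (1-u_i)\,\nu_\mathbf{k}(\mathrm{d}u)$. We would then set $\rho(\mathbf{x}) := \nu_\mathbf{x}(\{\mathbf{1}\})$ for each $\mathbf{x} = (\ell_i+1)\mathbf{e}_i \in \Gamma_{\bm\ell}$; the compatibility relation multiplies the corner atom by $(1-u_i)|_{u_i=1} = 0$, which forces $\nu_\mathbf{k}(\{\mathbf{1}\}) = 0$ for every $\mathbf{k} \notin \Gamma_{\bm\ell}$, and a similar argument applied across two distinct minimal elements $\mathbf{x}_i, \mathbf{x}_j$ shows that $\nu_{\mathbf{x}_i}$ carries no mass on the slab $\{u_i=1\} \setminus \{\mathbf{1}\}$. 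A candidate measure $\tilde J$ on $[0,1]^d \setminus \{\mathbf{1}\}$ is then defined piecewise on the half-cubes $\{u_i<1\}$ by $\tilde J|_{\{u_i<1\}}(\mathrm{d}u) := (1-u_i)^{-(\ell_i+1)}\,(\nu_{\mathbf{x}_i} - \rho(\mathbf{x}_i)\delta_\mathbf{1})(\mathrm{d}u)$, and $J$ is taken to be the pushforward of $\tilde J$ under $u \mapsto 1-u$, extended by zero at $\mathbf{0}$. The claimed representation for $\mu_{\mathbf{b},\mathbf{k}}$ is then read off directly from the Hausdorff moment identity combined with the change of variables $s = 1-u$, and the integrability condition~\eqref{eq:integrability2} is automatic from $\int s^\mathbf{x} J(\mathrm{d}s) = \int (1-u)^\mathbf{x}\tilde J(\mathrm{d}u) \leq \nu_\mathbf{x}([0,1]^d) = \mu_{\mathbf{x},\mathbf{x}} < \infty$.

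The hardest step will be the gluing: I need to check that the piecewise prescriptions of $\tilde J$ on the overlapping half-cubes $\{u_i<1\} \cap \{u_j<1\}$ genuinely agree. This reduces to the identity $(1-u_i)^{\ell_i+1}\nu_{\mathbf{x}_j} = (1-u_j)^{\ell_j+1}\nu_{\mathbf{x}_i}$, which we obtain by iterating the compatibility relation along the two routes $\mathbf{x}_j \to \mathbf{x}_i+\mathbf{x}_j$ and $\mathbf{x}_i \to \mathbf{x}_i+\mathbf{x}_j$ and invoking uniqueness of $\nu_{\mathbf{x}_i+\mathbf{x}_j}$. Once the gluing is settled, everything else is bookkeeping on top of the standard multidimensional Hausdorff theorem, modulo the mild check that $\tilde J$ extends from these local definitions to a genuine Borel measure on $[0,1]^d \setminus \{\mathbf{1}\}$, which is ensured by the absence of mass on the slabs $\{u_i = 1\}$ noted above.
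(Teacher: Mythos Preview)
Your proof is correct and follows essentially the same route as the paper's: the paper first proves the special case $B_{\bm\ell} = \{\mathbf{0}\}$ via de Finetti's theorem for separately exchangeable $\{0,1\}$-valued sequences (your multidimensional Hausdorff moment theorem in probabilistic disguise), applies it to the translated arrays $\mu^{\bx}_{\bb,\bk} := \mu_{\bb+\bx,\bk+\bx}$ at each minimal $\bx \in \Gamma_{\bm\ell}$, and then carries out exactly your gluing argument on the resulting measures, in the variable $s = 1-u$. The only cosmetic differences are your change of variables and your choice to extract a representing measure $\nu_{\bk}$ at every $\bk$ rather than only at the $d$ minimal elements.
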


In essence, Theorem \ref{thm:algebraic} states that the global behaviour of $\Gamma$-indexed arrays satisfying \eqref{eq:recursion2} is governed by a measure $J$ on $[0,1]^d$, though additional degrees with additional degrees of freedom arising due to the minimal elements of $\Gamma$.

Theorem \ref{thm:main} follows quickly from Theorem \ref{thm:algebraic}. Sketching the key steps here, for each $i$ the array $(\lambda_{\bb,\bk \to i } : \bk \leq \bb  \in \mathbb{Z}_{ \geq 0}^d )$ associated with the merger rates of the type $i$ mergers satisfies the recursion \eqref{eq:recursion2} and is defined for all $\bk \leq \bb$ in the cofinite upper set $\Gamma_i = \mathbb{Z}_{\geq 0}^d - \{\bm{0},\mathbf{e}_i\}$. As mentioned above, the minimal elements of $\mathbb{Z}_{ \geq 0}^d - \{ \bm{0},\mathbf{e}_i \}$ are $M_i := \{\mathbf{e}_1,\ldots,\mathbf{e}_{i-1},2\mathbf{e}_i,\mathbf{e}_{i+1},\ldots,\mathbf{e}_d\}$. It follows from Theorem \ref{thm:algebraic} that for each $i$ there is a function $\rho_i:M_i  \to [0,\infty)$ and a measure $J_i$ such that 
\begin{align*}
\lambda_{ \mathbf{b}, \mathbf{k} \to i} = \sum_{ \mathbf{x} \in M_{\mathbf{e}_i } } \mathrm{1}_{ \bk = \bx } \rho_i(\bx)+  \int_{[0,1]^d} s^\mathbf{k} (1 - s)^{\mathbf{b} - \mathbf{k}} J_i( \mathrm{d} s). 
\end{align*}
Writing $Q_{\to i}$ for $J_i$ and $\rho_{ii \to i} := \rho_i(2 \mathbf{e}_i)$ and $\rho_{j \to i } := \rho_i(\mathbf{e}_j)$, we obtain Theorem \ref{thm:main} as written.

\color{black}

In Section \ref{sec:further} we explore the algebraic structure of the rates from a different perspective, supplying probabilistic intuition for the integrability conditions \eqref{eq:integrability}, as well as exploring from a probabilistic perspective why no consistent process may have pairwise mergers at a rate $\rho_{jl \to i} > 0$ across different types (i.e. unless $j = \ell = i$).

\subsection{Coming down from infinity} \label{sec:coming down}
Take a multitype $\Lambda$-coalescent with $\Lambda = (\rho_{j \to i}, \rho_{ii \to i}, Q_{\to i} : 1 \leq i \neq j \leq d )$. Whenever $\bm \gamma$ is a $d$-type partition of a set $S$, we write $\mathbb{P}_{\bm \gamma}$ for the law of the $d$-type $\Lambda$-coalescent on $S$ starting from $\bpi(0) = \bm \gamma$. Write $N_i(t) := \# \pi_i(t)$ for the number of type $i$ blocks in the process. We say a $d$-type $\Lambda$-coalescent \emph{comes down from infinity} if 
\begin{align*}
\mathbb{P}_{\bm \gamma} \left( N_i(t) < \infty ~~ \text{for each $i = 1,\ldots,d$ and $t > 0$} \right) =1
\end{align*}
for every $d$-type partition $\bm \gamma = (\gamma_1,\ldots,\gamma_d)$ of a countable set $S$.

Shortly, we provide a characterisation of when $d$-type $\Lambda$-coalescents come down from infinity. Before this however, we review the single-type case. The first breakthrough in characterising when single-type $\Lambda$-coalescent processes come down from infinity was due to Schweinsberg \cite{schweinsberg2000necessary}, who gave a necessary and sufficient result in terms of the finiteness of a sum involving rates in the process. Bertoin and Le Gall \cite{bertoin2006stochastic} established an alternative criterion, drawing on connections with continuous-state branching processes by introducing the function
\begin{align} \label{eq:single processing}
\psi(q) := \frac{\rho}{2} q^2 +  \int_0^1 e^{ - qs} - 1 + qs ~Q( \mathrm{d}s),
\end{align}
 where we have used the $\Lambda = (\rho,Q)$ decomposition set out in \eqref{eq:decomposition}. Bertoin and Le Gall show that a $\Lambda$-coalescent comes down from infinity almost surely if and only if 
\begin{align} \label{eq:blg int}
\int_s^\infty \frac{ dq}{ \psi(q)} < \infty \qquad \text{for $s > 0$.}
\end{align}
We call the function $\psi(q)$ the processing speed, since it can be thought of as measuring the asymptotic speed of coagulation.

With a view to relating the multitype case to the well understood single-type case, given a multitype $\Lambda$-coalescent with $\Lambda = (\rho_{j \to i},\rho_{ii \to i}, Q_{\to i})$ in Section \ref{sec:coming down} we consider various projections of the $d$-type coalescent obtained by only considering the dynamics associated with blocks of a certain type. In particular, we define the type $i$ processing speed of the multitype $\Lambda$-coalescent by 
\begin{align} \label{eq:proj processing}
\psi_i(q) := \frac{ \rho_{ii \to i}}{2} q_i^2 + \int_{[0,1]^d} e^{ - q s_i} - 1 + q s_i ~Q_{ \to i }( \mathrm{d} s).
\end{align}
We highlight that the integrand in \eqref{eq:proj processing} depends only on the $i^{\text{th}}$ component $s_i$ of $s \in [0,1]^d$. We also note that since $e^{ - q s_i} - 1 + q s_i \leq C_q s_i^2$, for some constant $C_q$ depending on $q$, \eqref{eq:integrability} guarantees that $\psi_i(q)$ is finite.

The main result of this section characterises when $d$-type $\Lambda$-coalescents come down from infinity.

\begin{thm} \label{thm:coming down}
Let $(\pi(t))_{t \geq 0}$ be a $d$-type $\Lambda$-coalescent with $\Lambda = (\rho_{j \to i}, \rho_{ii \to i}, Q_{ \to i} )$, and let $\psi_1,\ldots,\psi_d$ given as in \eqref{eq:proj processing}. Then $(\pi(t))_{t \geq 0}$ comes down from infinity if and only if
\begin{align*}
\int_s^\infty \frac{ \mathrm{d} q }{ \psi_i(q) } < \infty \qquad \text{for each $i = 1,\ldots,d$ and every $s > 0$}.
\end{align*}
\end{thm}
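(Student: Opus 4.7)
The plan is to reduce Theorem~\ref{thm:coming down} to the single-type Bertoin--Le Gall integral criterion \eqref{eq:blg int}, applied to $d$ auxiliary single-type $\Lambda_i$-coalescents, one for each colour. For each $i$, let $Q_i$ be the pushforward of $Q_{\to i}$ under the projection $s \mapsto s_i$, and let $\Lambda_i$ be the single-type measure on $[0,1]$ whose $(\rho,Q)$-decomposition \eqref{eq:decomposition} is $(\rho_{ii\to i}, Q_i)$. A direct calculation using \eqref{eq:single processing} shows that the Bertoin--Le Gall processing speed of $\Lambda_i$ equals the $\psi_i$ of \eqref{eq:proj processing}; so the integral appearing in the statement of Theorem~\ref{thm:coming down} is exactly the Bertoin--Le Gall integral for $\Lambda_i$.

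For the sufficiency direction, I would argue as follows. Assume $\int_s^\infty dq/\psi_i(q) < \infty$ for every $i$. Using the Poisson construction of the multitype $\Lambda$-coalescent, observe that the atoms carrying the type-$i$ self-interactions, namely the $\rho_{ii\to i}$ pairwise mergers and the $Q_{\to i}$ big mergers, act on the set of current type-$i$ blocks in exactly the same manner in which they would act on the blocks of a single-type $\Lambda_i$-coalescent (in a big $Q_{\to i}$ event with involvement $s$, each current type-$i$ block is recruited independently with probability $s_i$). Consequently, in the absence of colour changes and cross-type absorptions, $N_i$ would inherit the CDI property of the single-type $\Lambda_i$-coalescent via \cite{bertoin2006stochastic}. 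Cross-type absorptions (in $Q_{\to j}$ events, $j\ne i$) and colour changes $i\to j$ only accelerate the descent of $N_i$; the only mechanism that can inflate $N_i$ is the influx from colour changes $j\to i$, at per-block rate $\rho_{j \to i}$.

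For the necessity direction, I would construct, for each $i$, an auxiliary single-type $\Lambda_i$-coalescent $M_i(t)$ driven by the restriction of the Poisson randomness to the $\rho_{ii\to i}$ and $Q_{\to i}$ atoms, acting on the sub-collection of initial type-$i$ lineages that have been touched only by those atoms (i.e. not by a colour change nor by a cross-type merger). Such lineages carry colour $i$ throughout and represent pairwise-distinct current type-$i$ blocks of the multitype process, so the $M_i$-block count is bounded above by $N_i(t)$. Because the $\rho_{ii\to i}$ and $Q_{\to i}$ atoms are Poisson and independent of the remaining randomness, $M_i$ has the law of a single-type $\Lambda_i$-coalescent started from infinity. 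Finiteness of $N_i(t)$ then forces finiteness of $M_i(t)$, and \eqref{eq:blg int} applied to $\Lambda_i$ yields $\int_s^\infty dq/\psi_i(q) < \infty$.

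The main technical obstacle is the cross-type coupling, which I expect to show up most severely in sufficiency: at time zero every $N_j$ is infinite, so naively the colour-change rate $\rho_{j\to i}$ could produce an instantaneous infinite influx of type-$i$ blocks. I plan to handle this by a simultaneous bootstrap in time: running the projected-dynamics argument for all $d$ colours on a short interval $[0,\eps]$ first establishes $N_j(\eps) < \infty$ for every $j$ almost surely; once this is in hand, the subsequent colour-change fluxes into every type are finite and the extension of CDI to all $t > 0$ proceeds by standard arguments. A secondary subtlety is the verification that the ``untouched-lineage'' construction in the necessity direction really yields a $\Lambda_i$-coalescent from infinity, which I would justify via the exchangeability of blocks of a common colour established in Section~\ref{sec:exch} together with the independence of the defining Poisson families.
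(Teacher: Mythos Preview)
Your necessity argument is close to the paper's, but the claim that $M_i$ ``has the law of a single-type $\Lambda_i$-coalescent started from infinity'' is not correct as stated: restricting to type-$i$ lineages untouched by colour changes or cross-type mergers produces a $\Lambda_i$-coalescent \emph{with killing}, not a pure one. The paper handles exactly this point by coupling the killed process with an unkilled single-type $\Lambda_i$-coalescent $\kappa(t)$ and noting that, conditionally on $\kappa$, each block survives the killing on $[0,t]$ with probability at least $e^{-r_i t}$, where $r_i=\sum_{j\neq i}\rho_{i\to j}+\sum_{j\neq i}\int s_i\,Q_{\to j}(\mathrm{d}s)<\infty$ by \eqref{eq:integrability}. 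This yields $\mathbb{E}[N_i(t)]\ge e^{-r_it}\mathbb{E}[\#\kappa(t)]$, after which Bertoin--Le~Gall applies to $\kappa$. So this direction is essentially as in the paper, modulo the missing killing comparison.

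The sufficiency direction, however, has a genuine gap: the bootstrap is circular. At time zero every $N_j$ is infinite, so the colour-change influx into type $i$ occurs at infinite rate $\sum_{j\neq i}\rho_{j\to i}N_j(0^+)$, and your comparison of $N_i$ with a single-type $\Lambda_i$-coalescent plus finite immigration is unavailable until \emph{after} all the $N_j$ are finite --- which is precisely what you are trying to establish. Running the argument ``simultaneously for all colours on $[0,\eps]$'' does not break the circle, since each coordinate's step presupposes the conclusion for the others. (You also omit a second source of influx: a $Q_{\to i}$-event in which no type-$i$ block but some type-$j$ block participates increases $N_i$ by one, and under \eqref{eq:integrability} the rate of such events need not be finite.) The paper avoids the coordinate-wise viewpoint altogether for this direction and instead tracks the \emph{total} block count $N(t)=\sum_i N_i(t)$, which is monotone non-increasing and hence immune to all cross-type flux. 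Its expected instantaneous decrease at block vector $(n_1,\ldots,n_d)$ is $\Psi(n_1,\ldots,n_d)$ as in \eqref{eq:full processing}, which is convex; Jensen then gives $\mathbb{E}[N(t)]\le n-\int_0^t\Omega(\mathbb{E}[N(u)])\,\mathrm{d}u$ with $\Omega(x)=\min_{\sum x_i=x}\Psi(x_1,\ldots,x_d)$. A direct inequality $\Psi(x_1,\ldots,x_d)\ge\sum_i\tilde\psi_i(x_i)$ shows that finiteness of every $\int\mathrm{d}q/\psi_i(q)$ forces $\int\mathrm{d}q/\Omega(q)<\infty$, and an ODE comparison then bounds $\mathbb{E}[N(t)]$ uniformly in the initial configuration. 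The convexity/Jensen step on the total count is the key idea your proposal lacks.
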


The proof of Theorem \ref{thm:coming down} is given in Section \ref{sec:coming down}. The `only if' part of the proof is fairly straightforward, and follows from a coupling argument relating the multitype $\Lambda$-coalescent to a single-type $\Lambda$-coalescent with killing, whereupon we appeal to the Bertoin and Le Gall's criterion \cite{bertoin2006stochastic} in the single-type case. 

The `if' direction of the proof on the other hand is more involved, since in this setting we face complications unseen in the single-type case, with the most notable difficulty being the non-monotonicity of the number of blocks of a given type. Here we find the need to appeal to convexity arguments in order to control the rate at which the total number of blocks decreases. Indeed, we have a multidimensional analogue $\Psi:\mathbb{R}^d_{\geq 0} \to [0,\infty)$ of the single-type processing speed given by 
\begin{align}  \label{eq:full processing}
\Psi(q_1,\ldots,q_d) := \sum_{ i = 1}^d \frac{\rho_{ii \to i}}{2} q_i(q_i-1) + \sum_{ i  = 1}^d  \int_{[0,1]^d}  \langle \mathbf{q} , s \rangle - 1 + \prod_{ j = 1}^d (1 - s_j)^{q_j}  ~ Q_{\to i }( \mathrm{d} s)
\end{align}
 where $\langle \mathbf{n},s \rangle = \sum_{ j = 1}^d q_j s_j$ is the inner product. We will see that the function $\Psi(n_1,\ldots,n_d)$ is precisely the expected rate at which the total number of blocks in the process decreases when there are $\mathbf{n}$ blocks. (We note that in one dimension $\Psi$ differs from Bertoin and Le Gall's function $\psi$ defined in \eqref{eq:single processing}; our formula gives the precise expected rate of decrease, where as their simpler formula characterises only the large-$q$ asymptotics of the expected rate of decrease when there are $q$ blocks. The two formulas have equivalent asymptotics for large $q$, and hence the relevant ODEs in \eqref{eq:ODE} and \eqref{eq:full ODE} below have equivalent small time behaviour.) Crucially, $\Psi$ is a convex function in $\mathbf{q} \in \mathbb{R}_{ \geq 0}^d$, so that we may use Jensen's inequality to control the number of blocks in terms of the expected number of blocks, and then appeal to an ODE comparison argument. 

While in the present article we give a conclusive verdict on the whether or not a multitype $\Lambda$-coalescent comes down from infinity, the matter of the speed at which the process comes down is far more delicate, and is as-of-yet unclear to the authors. In the single-type case, Berestycki, Berestycki and Limic \cite{berestycki2010lambda} showed that roughly speaking, when the process has a large number $n$ of blocks, the asymptotic rate at which blocks coagulate is well captured by $\psi(n)$ (with $\psi$ given in \eqref{eq:single processing}). More precisely, they show that
\begin{align*}
\lim_{t \downarrow 0} N(t)/v(t) = 1 \qquad \text{almost surely},
\end{align*}
where $N(t)$ is the number of blocks in the process at time $t$, $v(t)$ is the solution to the ordinary differential equation
\begin{align} \label{eq:ODE}
\dot{v}(t) = - \psi \left( v(t) \right) \qquad v(0) = + \infty.
\end{align}
The existence of solutions to the ordinary differential equation in \eqref{eq:ODE} is equivalent to the finiteness of the integral in \eqref{eq:blg int}.

As for the multitype case, we speculate that provided one can make sense of the initial conditions, the small-$t$ asymptotics of the block numbers $(N_1(t),\ldots,N_d(t))$ are connected to ordinary differential equations associated with the 
flow on $\mathbb{R}^d$ associated with $\Phi:\mathbb{R}_{ \geq 0}^d \to \mathbb{R}^d$, where the $i^{\text{th}}$ component of $\Phi$ is given by 
\begin{align} \label{eq:phii}
\Phi_i(q_1,\ldots,q_d) &:= \sum_{ j \neq i } (\rho_{ j \to i} q_j - \rho_{i \to j } q_i ) - \rho_{ ii \to i } \frac{q_i(q_i-1)}{2}  \\
&- \sum_{ j = 1}^d \int_{[0,1]^d} s_i q_i Q_{\to j}(\mathrm{d}s) + \int_{[0,1]^d} 1 - \prod_{ j = 1}^d (1 - s_j)^{q_j} Q_{ \to i}(\mathrm{d}s). \nonumber
\end{align}
We take a moment to explain the motivation behind the function $\Phi$. When the process has $\mathbf{n}$ blocks, the $i^{\text{th}}$ component $\Phi_i(n_1,\ldots,n_d)$ of $\Phi(n_1,\ldots,n_d)$ gives the expected change in the number of blocks of type $i$ thanks to the following contributions:
\begin{itemize}
\item For $j \neq i$, at rate $\rho_{j \to i}$, a block of type $j$ changes to type $i$. It follow that if there are $n_j$ blocks of type $i$, the average increase in blocks of type $i$ due to this effect is given by $\rho_{j \to i} n_j$. Conversely, at rate $\rho_{i \to j}$, a block of type $i$ changes to colour to type $j$, and accordingly the average \emph{decrease} in blocks of type $i$ due to this effect is given by $\rho_{i \to j} n_i$. 
\item At rate $\rho_{ii \to i} \frac{n_i(n_i-1)}{2}$, a pair of blocks of type $i$ merge to form a single block of type $i$ (decreasing the total number of type $i$ blocks by $1$).
\item For each $j$, at rate $Q_{\to j}(\mathrm{d}s)$ a merger of type $j$ with involvement probabilities $s = (s_1,\ldots,s_d)$ occurs, and since each type $i$ block has a probability $s_i$ of being involved in this event, when there are $n_i$ type $i$ blocks the expected rate of \emph{decrease} in the number of blocks of type $i$ due to such events is $\sum_{ j = 1}^d \int_{[0,1]^d} s_i n_i Q_{\to j}(\mathrm{d}s)$. 
\item Finally, we have the term $ \int_{[0,1]^d} 1 - \prod_{ j= 1}^d (1 - s_j)^{n_j} Q_{ \to i}(\mathrm{d}s)  $, which is due to 
the possible creation of a new block of type $i$ at a type $i$ merger event, c.f.\ Remark \ref{rem:create}. A type $i$ merger event with involvement probabilities $s = (s_1,\ldots,s_d)$ has probability $1 - \prod_{j=1}^d (1 - s_j)^{n_j}$ of being non-empty; at such an event a `new' block of type $i$ is created.
\end{itemize}
We recall from above that the function $\Psi:\mathbb{R}_{ \geq 0}^d \to [0,\infty)$ defined in \eqref{eq:full processing} measures the expected \emph{decrease} in the total number of blocks across all types. In particular, comparing \eqref{eq:full processing} and \eqref{eq:phii} the reader may verify that 
\begin{align*}
\Psi(q_1,\ldots,q_d) = - \sum_{ i = 1}^d \Phi_i(q_1,\ldots,q_d).
\end{align*}
Under the heuristic that the small-time asymptotics for the number of blocks are governed by a multivariate ODE based on the expected changes, 
\color{black}
we conjecture the following:
\begin{conj}
Let $\Phi:\mathbb{R}^d \to \mathbb{R}^d$ be the function with components given in \eqref{eq:phii}. Then provided the conditions of Theorem \ref{thm:coming down} hold, the ordinary differential equation
\begin{align} \label{eq:full ODE} 
\dot{v}(t) = \Phi(v(t)) ~~\qquad v_i(0) = +\infty
\end{align}
has a unique solution that captures the asymptotic speed at which the $\Lambda$-coalescent comes down from infinity. That is, starting with infinitely many blocks of each type we have
\begin{align*}
\lim_{ t \downarrow 0} N_i(t)/v_i(t) = 1 ~~~~~ \text{almost surely.}
\end{align*}

\end{conj}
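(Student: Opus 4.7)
The plan is to extend the Berestycki--Berestycki--Limic programme from the single-type setting to multitype $\Lambda$-coalescents. The central object is the infinitesimal drift $\Phi_i$: for any consistent projection of the process to a finite set, the martingale problem associated with the multitype coalescent generator shows that
\begin{align*}
M_i(t) := N_i(t) - N_i(0) - \int_0^t \Phi_i(N_1(s),\ldots,N_d(s))\,\mathrm{d}s
\end{align*}
is a local martingale, with quadratic variation that can be read off from the intensity measures $\rho_{j\to i}$, $\rho_{ii\to i}$ and $Q_{\to i}$. The high-level strategy is to argue that this martingale is negligible as $t \downarrow 0$, so that $N_i(t)$ tracks the deterministic flow of $\Phi$, for which $v(t)$ is, by construction, the relevant entrance trajectory.

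The first step would be to establish existence and uniqueness of a solution $v(t)$ to \eqref{eq:full ODE} entering from the point at infinity. Under the hypothesis of Theorem \ref{thm:coming down} each $\psi_i$ satisfies the Bertoin--Le~Gall integrability criterion, and I would upgrade this to the coupled system using the convexity of $\Psi$ in \eqref{eq:full processing} already exploited in the `if' direction of Theorem \ref{thm:coming down}. A Lyapunov function built from $\sum_i \int_{v_i}^\infty \mathrm{d}q/\psi_i(q)$ should pin down uniqueness near $t=0$ and give meaning to the initial condition $v_i(0)=+\infty$.

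With the ODE in hand, the upper bound $\limsup_{t\downarrow 0} N_i(t)/v_i(t) \leq 1$ would be obtained by truncating to projections on finite sets $S_n \uparrow S$ (using consistency), applying Doob and Burkholder--Davis--Gundy inequalities to $M_i^{(n)}(t)$ to show that its fluctuations are dominated by the drift term, and then running a Gronwall-type comparison with \eqref{eq:full ODE} using monotonicity of $\Phi$ in the downward-coalescence part. A dual argument --- together with the observation that a depressed $N_i$ would produce a deficit in the downward drift of $N_j$ through the integrand $1-\prod_k(1-s_k)^{n_k}$, incompatible with the almost sure coalescence rate --- should yield the matching lower bound.

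The main obstacle, flagged in the paper as the source of difficulty for Theorem \ref{thm:coming down} itself, is the non-monotonicity of $(N_1(t),\ldots,N_d(t))$: individual coordinates can grow through colour-change events, so the sandwich couplings available in the single-type case are unavailable here. When the processing speeds $\psi_i$ have genuinely different orders of growth, the ODE flow near $t=0$ is anisotropic and the faster-descending components drive the slower ones via the coupling terms in $\Phi_i$; the fluctuations of the fast components then feed back into the slow ones through those same couplings. Controlling this feedback uniformly as $t\downarrow 0$, and thereby making precise the initial condition $v_i(0)=+\infty$ in the multi-scale regime, will likely require anisotropic concentration estimates and a time change that linearises the flow along its entrance manifold --- this is where I expect the essential new difficulty to lie.
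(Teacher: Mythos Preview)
The statement you are addressing is a \emph{conjecture}: the paper does not prove it. The authors explicitly write that ``the matter of the speed at which the process comes down is far more delicate, and is as-of-yet unclear to the authors,'' and then state the conjecture as an open problem. There is therefore no paper proof to compare against.

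Your proposal is not a proof but a research programme, and to your credit you seem to recognise this: your final paragraph locates the essential difficulty rather than resolving it. The outline you give --- martingale problem for $N_i(t)$, entrance-from-infinity existence/uniqueness for the ODE via the $\psi_i$ integrability and convexity of $\Psi$, then BDG-type control of fluctuations plus Gronwall comparison --- is a plausible extension of the Berestycki--Berestycki--Limic single-type argument, and you correctly flag the non-monotonicity of the individual $N_i(t)$ as the structural obstruction (the paper makes the same point). However, several of the steps you list are not yet substantiated: the Lyapunov function $\sum_i \int_{v_i}^\infty \mathrm{d}q/\psi_i(q)$ is asserted to give uniqueness but no argument is given for why it interacts well with the cross-type coupling terms in $\Phi_i$; the ``monotonicity of $\Phi$ in the downward-coalescence part'' needed for the Gronwall comparison is not established and is not obvious given the mixed-sign structure of $\Phi_i$; and the lower-bound heuristic via ``deficit in the downward drift'' is not an argument. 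The multi-scale/anisotropic issue you raise at the end is real and, as you say, unresolved.

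In short: there is no gap relative to the paper because the paper proves nothing here, but what you have written is a sketch of an approach to an open problem, not a proof.
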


Let us highlight in particular one consequence of Theorem \ref{thm:coming down}, namely that the projected processing speeds $\psi_1,\ldots,\psi_d$ given in \eqref{eq:proj processing} are independent of the rates $\rho_{j \to i}$ at which individual blocks change colour, and hence by Theorem \ref{thm:coming down} the matter of whether a multitype $\Lambda$-coalescent comes down from infinity is also independent of these rates. While the rates $\rho_{j \to i}$ have no effect on whether the process comes down from infinity, they feature in the definition of $\Phi$, and we therefore believe they have a significant effect on the speed at which the numbers of different types of blocks come down from infinity.

The remainder of the paper is structured as follows. 

\begin{itemize}
\item In Section \ref{sec:further} we explore further properties of multitype $\Lambda$-coalescents, and collect together various instances of multitype $\Lambda$-coalescents occuring in the literature.
\item In Section \ref{sec:main proof} we develop our de Finetti for intratype exchangeable arrays, and use this theory to ultimately prove our main classification result, Theorem \ref{thm:main}.
\item In Section \ref{sec:coming down} we prove Theorem \ref{thm:coming down}, concerning the characterisation of when a multitype $\Lambda$-coalescent comes down from infinity. 
\end{itemize}

\section{Examples and non-examples of $\Lambda$-coalescents} \label{sec:further}

In this section we will examine multitype $\Lambda$-coalescents in full detail, looking at various structural aspects of these processes as well as several examples. We begin in the next section by studying the specific form taken by the merger rates of the coalescent.
\subsection{The impossibility of replicator pairwise mergers}
Upon closer inspection of Theorem \ref{thm:main}, the reader will note that an exchangeable and consistent coalescent process with asynchronous mergers may not have pairwise mergers across different types. That is, no such process may exist in which the merger rates take the more general form
\begin{align} \label{eq:merger rates d 2}
\lambda_{\mathbf{b},\mathbf{k} \to i} = \sum_{j \neq i} \mathrm{1}_{ \mathbf{k} = \mathbf{e}_j } \rho_{j \to i} + \sum_{ 1 \leq j \leq \ell \leq d} \rho_{j \ell \to i} \mathrm{1}_{ \mathbf{k} = \mathbf{e}_j + \mathbf{e}_\ell } + \int_{[0,1]^d} s^{\mathbf{k}}(1 - s)^{\mathbf{b} - \mathbf{k}} Q_{ \to i} ( \mathrm{d} s),
\end{align}
unless each of the terms $\rho_{j \ell \to i} = 0$ for all $j,\ell$ not both equal to $i$. 

It transpires that any process with some $\rho_{j \ell \to i} \neq 0$ for some pair $(j,\ell)$ such that $j$ and $\ell$ are not both equal to $i$ does not have projections that are Markovian in their own filtration. To see this, consider the projection of such a $\Lambda$-coalescent onto $\{ \mathbf{n}\} = \{ \mathbf{e}_j \}$, which amounts to tracking the colour of a block that started as colour $j$ at time zero. Tracking the colour of this single block, the rate at which this block changes colour to type $i$ is clearly dependent on the number of type $k$ blocks that currently exist in the system, and hence is not Markovian in its own filtration. 

\color{black}

We would like to emphasise however that one can construct interesting exchangeable coalescent processes that are not consistent, provided there is a finite number of initial blocks:

\begin{example}[An exchangeable but non-consistent $d$-type coalescent process (not a $\Lambda$-coalescent!)]
The \emph{replicator coalescent} is the exchangeable but inconsistent Markov process taking values in the set of $d$-type partitions of  $[\mathbf{n}] := \{ (i,j) : 1 \leq i \leq  d, 1 \leq j \leq n_i \}$ and defined as follows. The process starts with $n_i$ blocks of type $i$, with each $\pi_i(0)$ being the partition of $\{i\} \times \{1,\ldots,n_i\}$ into singletons. The process is then governed by the following merger rates:
\begin{itemize}
\item Any single block of type $j$ changes type to type $i$ at rate $\rho_{j \to i}$.
\item Any pair of blocks of types $j$ and $\ell$ merge to form a single block of colour $i$ at rate $\rho_{j \ell \to i}$. 
\end{itemize}
\end{example}
The replicator coalescent is a well-defined Markov process whose law is exchangeable in that it is invariant under the action of a $d$-tuple of permutations $\sigma_1,\ldots,\sigma_d$ where $\sigma_i : \{ i \} \times \{1,\ldots,n_i \} \to \{i\} \times \{1,\ldots,n_i\}$. However, this process does not have the property that its partitions induced by projection onto subsets of $[\mathbf{n}]$ are Markov processes on their own filtration. The replicator coalescents are studied in work in preparation by the latter two authors together with Lizbeth Pe\~naloza Valesco.

Conversely, the reader may wonder whether there are non-exchangeable multi-type coalescent processes that are consistent. It is easy to construct such processes:

\begin{example}[A consistent but non-exchangeable $2$-type coalescent process (not a $\Lambda$-coalescent!)]
Let $(q_k)_{k \in \mathbb{N}}$ be any collection of positive reals, not all identical, and take the process in which we start with countably many red blocks $\{1\},\{2\},\ldots$. Suppose the red block $\{k\}$ turns blue at rate $q_k$, and then the blue blocks perform a standard Kingman coalescent amongst themselves (say). This is a two-type coalescent process on $\mathbb{N}$ with Markovian projections. However this process is not exchangeable, since there are natural numbers $k \neq j$ such that the rate at which $\{k\}$ turns blue is different to the rate at which $\{j\}$ turns blue, and hence the law of the process is not invariant under the transposition of $k$ and $j$. 

\end{example}

\color{black}
\subsection{The integrability of the merger measure}

We now turn to discussing momentarily the integrability condition \eqref{eq:integrability}. The quantity
\[ r_{ii \to i} :=  \rho_{ii \to i} + \int_{[0,1]^d} s_i^2 Q_{\to i}(\mathrm{d} s) < \infty\]  
is precisely the rate at which any two separate blocks of type $i$ merge to form a single block of type $i$. Similarly, the quantity 
\[ r_{j \to i} :=  \rho_{j \to i} + \int_{[0,1]^d} s_j Q_{\to i}( \mathrm{d} s )\]
is the rate at which a single block of type $j$ changes colour to type $i$.
Both of these rates are, by the consistency of the process, independent of the remainder of the system.  Clearly both of these rates need to be finite in order to have a well-defined Markov process with the Feller property. In the single-type case the requirement that $r_{ii \to i}$ is finite is usually formulated in terms of the finiteness of $\Lambda(\mathrm{d}s) := \rho \delta_0(\mathrm(ds) + s^2 Q(\mathrm{d}s)$ as a measure on $[0,1]$.
\color{black}

The remainder of this section is dedicated to looking at examples of multitype $\Lambda$-coalescents that have appeared in the literature. We begin in the next section with the multitype Kingman coalescent first due to Notahara \cite{notohara1990coalescent}.

\subsection{Multitype Kingman coalescents}
We refer to the collection of multitype $\Lambda$-coalescents for which $Q_{\to i} = 0$ for every $i$ as the \emph{multi-type Kingman coalescents}:
\begin{example} \label{ex:multi kingman}
The multitype Kingman coalescent is the exchangeable and consistent stochastic process $(\pi(t))_{t \geq 0}$ taking values in the set of $d$-type partitions of $\mathbb{N}_d := \{ (i,j) : 1 \leq i \leq d, j \in \mathbb{N}\}$ such that at time zero each $\pi_i(0)$ is the partition of $\{i\} \times \mathbb{N}$ into singletons, and blocks merge according to the following merger rates:
\begin{itemize}
\item Any single block of type $j$ changes type to type $i$ at rate $\rho_{j \to i}$.
\item Any pair of blocks of type $i$ merge to form a single block of colour $i$ at rate $\rho_{ii \to i}$. 
\end{itemize}
\end{example}

By Theorem \ref{thm:coming down}, the multitype Kingman coalescent comes down from infinity if and only if $\rho_{ii \to i} > 0$ for each $i =1 ,\ldots,d$. These processes were first studied by Notohara, \cite{notohara1990coalescent}, who realises this group of processes in the context of a multitype version of the Wright-Fisher process, in which there is migration between types in the process.

\subsection{The coalescent structure of continuous-state branching processes}

In work by the first author with Amaury Lambert \cite{johnston2019coalescent}, multitype $\Lambda$-coalescents arise naturally in a study of the coalescent structure of the so-called $d$-dimensional continuous-state branching processes. A $d$-dimensional continuous-state branching process is a $\mathbb{R}^d_{\geq 0}$-valued Markov process enjoying the branching property: if $Z^x$ is a copy of the process starting from $x \in \mathbb{R}_{\geq 0}^d$ and $Z^y$ is a copy of the process starting from $y \in \mathbb{R}_{\geq 0}^d$, then $Z^x + Z^y$ has the same law as $Z^{x+y}$. The law of the process is governed by a \emph{branching mechanism}, a function $\phi:\mathbb{R}^d \to \mathbb{R}^d$ whose $i^{\text{th}}$ component has a L\'evy-Khintchine representation
\begin{align} \label{eq:lkrep}
\phi_i(\lambda)  = - \sum_{ j = 1}^d \kappa_{i,j} \lambda_j + \beta_i \lambda_i^2 + \int_{\mathbb{R}_{\geq 0}^d} \left( e^{ - \langle \lambda , r \rangle } - 1 +  \lambda_i r_i \mathrm{1}_{ r_i \leq 1 } \right) \nu_i(\mathrm{d}r),
\end{align}                                                                                                                                                                                                                                                                                                                                                                                                                                                                                                                                                                                                                                                                                                                                                                                                                                                                                                                                                                                                                                                                                                                                                                                                                                                                                                                                                                                                                                                                                                                                                                                                                                                                                                                                                                                                                                                                                                                                                                                                                                                                                                                                                                                                                                                                                                                                                                                                                                                                                                                                                                                                                                                                                                                                                                                                                                                                                                                                                                                                                                                                                                                                                                                                                                                                                                                                                                                                                                                                                                                                                                                                                                                                                                                                                                                                                                                                                                                                      
where $\nu_i$ is a measure on $\mathbb{R}_{ \geq 0}^d$ satisfying certain integrability conditions and determining the rates of jumps in the process. The $d$-dimensional continuous-state branching process is best thought of as appraising the evolution of a continuous $d$-type population in continuous time, and in \cite{johnston2019coalescent} the problem is considered of sampling various individuals from this population and tracing their genealogical histories back in time. These genealogical histories give rise to a $d$-type coalescent process. In \cite{johnston2019coalescent} it is shown that these coalescent processes, while non-Markovian in general, have a local representation in terms of a multitype $\Lambda$-coalescent. More explicitly, if a sample of $\mathbf{n}$ particles is taken from the population associated with a $d$-type continuous state branching process at a small time $t$, the authors show that the probability that a group of $\mathbf{b}$ blocks in the sample are descended from an ancestor of type $i$ at time zero takes precisely the form $\lambda_{\mathbf{b},\mathbf{k} \to i}$ in \eqref{eq:merger rates d}, where the quantities $\rho_{j \to i}$, $\rho_{ii \to i}$ and $Q_{\to i}$ may all be extracted from the $i^{\text{th}}$ component of the Levy-Khintchine representation. Just after a moment in which the population has size $x = (x_1,\ldots,x_d) \in \mathbb{R}_{ \geq 0}^d$, the mergers are governed by multitype $\Lambda$-coalescent type rates with:
\begin{align*}
\rho_{ii \to i} = \beta_i/x_i, ~~~\rho_{j \to i} = \kappa_{i \to j} x_i/x_j~~~ \text{and} ~~~ Q_{\to i} (\mathrm{d}s) = T_x^{ \# } \nu_{ i}
\end{align*}
where $T_x^{\#} \nu_i$ is the pushforward of $\nu$ on $\mathbb{R}_{\geq 0}^d$ onto $[0,1]^d$ via the map $r_i \mapsto \frac{ r_i}{ x_i + r_i}$. We refer the reader to Theorem 4.2 of \cite{johnston2019coalescent} for further details.

 We would like to highlight in particular the connection drawn in \cite{johnston2019coalescent} between Notohara's multitype Kingman coalescent and the $d$-dimensional Feller diffusion, the continuous-state branching process given by solutions to the $d$-dimensional stochastic differential equation 
\begin{align} \label{feller SDE}
\mathrm{d} Z_i(t) =  \sqrt{ 2  \beta_i Z_i(t) } \mathrm{d}B_i(t)  + \sum_{ j = 1}^d \kappa_{i \to j} Z_i(t)  \mathrm{d}t. 
\end{align}
The (non-negative) solutions to this equation may be thought of as giving the size of a $d$-type population, and upon making sense of taking a sample from this population at some time, its coalescent process evolves according to the following rules when the population size is $x = (x_1,\ldots,x_d)$:
\begin{itemize}
\item Any pair of blocks of type $i$ are merging to form a single block of type $i$ at rate $2 \beta_i / x_i$.
\item A block of type $j$ is turning to type $i$ at rate $ \kappa_{j \to i} x_j/x_i$.
\end{itemize}

That is, pairwise mergers within type $i$ occur at a rate inversely proportional to the type $i$ population size at that moment, and changes of colour of individual blocks depend on the ratio of the relevant population sizes. In fact, this connection between the multitype Kingman coalescent and the $d$-dimensional Feller diffusion was first noted in the single-type case by Donnelly and Kurtz \cite{donnelly1999particle}.

\subsection{The seed bank coalescent}
Blath et al. \cite{blath2016new} study a two-type variant of the Wright-Fisher model in which there are active individuals which reproduce according to the usual Wright-Fisher dynamics, as well as a dormant population who may become active at a certain rate. Blath et al. show that the coalescent process associated with sampling particles in the process and tracing their genealogies back in time is given by the following special case of Notohara's multitype Kingman coalescent seen in Example \ref{ex:multi kingman}.

\begin{example}[The Seed bank coalescent]
The seed bank Kingman coalescent is a two-type coalescent in which we have an active ($a$) type and a dormant ($d$) type, there are no giant mergers (i.e. $Q_{\to a} = Q_{\to d} = 0$), and
\begin{align*}
\rho_{aa \to a} > 0, \rho_{a \to d} > 0, \rho_{d \to a} > 0, \rho_{dd \to d} =0.
\end{align*}
\end{example}

\subsection{Further related work}
Limic and Sturm  \cite{limic2006spatial} discuss precisely the special case of our paper where for each $i$, for a fixed measure $J$ on $[0,1]$,
\begin{align*}
J_i(\mathrm{d} s ) := \delta_0( \mathrm{d}s_1) \times \ldots \delta_0( \mathrm{d} s_{i-1} ) \times J( \mathrm{d} s_i) \times \delta_0 ( \mathrm{d} s_{i+1} ) \times \ldots \times \delta_0( \mathrm{d} s_d).
\end{align*}

That is, big mergers only happen within types, and occur with the same measure for every type. Under these dynamics Limic and Sturm go on to study the space-time asymptotics of spatial $\Lambda$-coalescents on large tori in $d \geq 3$ dimensions, foreshadowing work by Angel, Berestycki and Limic \cite{angel2012global}, which we discuss in a moment. Heuer and Sturm \cite{heuer2013spatial} study a similar set up on the two-dimensional torus. 

Angel et al.\ \cite{angel2012global} look at a class of $d$-type $\Lambda$-coalescents whose types are indexed by the latttice $\mathbb{Z}^d$. The blocks whose type corresponds to a particular point in $\mathbb{Z}^d$ merge with other blocks at the same position according to the dynamics of a single-type $\Lambda$-coalescent, though we have the additional structure that the types of the underlying blocks change according to a random walk on $\mathbb{Z}^d$. The authors find for $d \geq 3$ that if the underlying mergers are governed by Kingman coalescent dynamics at each point, then starting with $N$ particles at the origin there are $O\left( (\log^*N)^{d-2} \right)$ blocks extant in the process at positive times. (Here, $\log^*x$ is the inverse of the tower function $x^x$.) Analogous results are found in $d=2$, as well as in the setting where the Kingman mergers are replaced by a beta coalescent. As in  Limic and Sturm \cite{limic2006spatial}, these coalescent processes only experience mergers within types.

Griffiths \cite{griffiths2016multi} shows that multitype coalescent processes with mutations or pairwise mergers arises in the context of coalescent duals to multitype $\Lambda$-fleming viot processes.

Freeman \cite{freeman2015segregated} constructs an extension of the single-type $\Lambda$-coalescent to the spatial continuum.

Greven, Limic and Winter \cite{greven2005representation} study genealogical questions in the setting of interacting Moran models and Fisher-Wright diffusions.

\color{black}

\section{Proof of the classification Theorem \ref{thm:main}} \label{sec:main proof}

In this section we prove Theorem \ref{thm:algebraic}, and use this result to prove our main result, Theorem \ref{thm:main} which provides a classification of exchangeable, consistent and asynchronous $d$-type coalescent processes in terms of multitype $\Lambda$-coalescents. Before proving Theorem \ref{thm:algebraic} in Section \ref{sec:algebraicproof}, we begin in the next section by exploring further the notion of $d$-type exchangeability, which was only touched on briefly in the introduction.

\subsection{Further discussion of $d$-type exchangeability} \label{sec:exch}

Recall that in the introduction we stated that a $d$-type coalescent process $(\bpi(t))_{t \geq 0}$ is exchangeable if its law is invariant under permutations of blocks of the same colour. In this section we give a more explicit description of this property and the surrounding definitions, recalling some of the notions given in the introduction.

A \emph{(single-type) partition} $\pi$ of a set $S$ is a collection of disjoint subsets (or \emph{blocks}) of $S$ whose union is $S$. A \emph{(single-type) coalescent process} on $S$ is a stochastic process $(\pi(t))_{t \geq 0}$ taking values in the set of partitions of $S$ with the property that for all times $t < t'$, every block of $\pi(t)$ is a subset of a block of $\pi(t')$. Recalling Definition \ref{df:dpart}, a $d$-type partition $\bpi = (\pi_1,\ldots,\pi_d)$ of a set $S$ is a $d$-tuple of disjoint collections of subsets of $S$ with the property that the \emph{underlying partition} $\hat{\bpi} := \pi_1 \cup \ldots \cup \pi_d$ is a single-type partition of $S$. A \emph{$d$-type coalescent process} on $S$ is a stochastic process $(\bpi(t))_{t \geq 0}$ taking values in the set of $d$-type partitions of $S$ with the property that the underlying partition process $(\widehat{\bpi(t)})_{t \geq 0}$ is a single-type coalescent process.

Let $S$ be a countable set. A bijection $\sigma:S \to S$ on a set $S$ has compact support (we call all such maps \emph{compact bijections} for short) if there are only finitely many $s \in S$ for which $\sigma(s)$ is different to $s$. 

We now recall the usual definition of exchangeability in the single-type case. Given a bijection $\sigma:\mathbb{N} \to \mathbb{N}$, and a partition $\pi$ of $\mathbb{N}$, we may define a new partition $\sigma \pi$ of $\mathbb{N}$ by letting $\sigma \pi$ be the partition of $\mathbb{N}$ whose blocks have the form $\{ \sigma^{-1}(i) : i \in \Gamma \}$ for blocks $\Gamma$ of $\pi$. That is, for $j,j'$ in $\mathbb{N}$
\begin{align*}
&\text{$j,j'$ are in the same block of $\sigma \pi$}\\
\iff  & \text{$\sigma(j),\sigma(j')$ are in the same block of $\pi$}
\end{align*}
A Markovian single-type coalescent process taking values in the set of partitions of $\mathbb{N}$ is \emph{exchangeable} if its law is invariant under the action of compact bijections $\sigma:\mathbb{N} \to \mathbb{N}$ of the natural numbers, i.e. for all such $\sigma$ the process $\left( \sigma \pi(t) \right)_{t \geq 0}$ has the same law as $(\pi(t))_{t \geq 0}$.

We now give a more intrinsic definition of exchangeability, again in the single-type case. Consider a single-type coalescent process $(\pi(t))_{t \geq 0}$ taking values in the set of partitions of a set $S$ and suppose $\pi(0)$ is countable. Since the process is a coalescent process, for each $t \geq 0$, each block of $\pi(t)$ is a union of blocks in $\pi(0)$. Let $\sigma:\pi(0) \to \pi(0)$ be a compact bijection. For each $t \geq 0$, we may create a new partition $\sigma \pi(t)$ of $S$ by declaring, for each pair of elements $\Gamma,\Gamma'$ of $\pi(0)$ (note: $\Gamma,\Gamma'$ are subsets of $S$) 
\begin{align*}
&\text{$\Gamma,\Gamma'$ are subsets of the same block of $\sigma \pi(t)$}\\
\iff &\text{$\sigma(\Gamma),\sigma(\Gamma')$ are subsets of the same block of $\pi(t)$}.
\end{align*}
Clearly the new process $(\sigma \pi (t))_{t \geq 0}$ is a coalescent process.
We say a Markovian single-type coalescent process starting from $\sigma \pi (0)$ is \emph{exchangeable} if its law is invariant under the action of bijections $\sigma:\pi(0) \to \pi(0)$, i.e. for all such $\sigma$ the process  $\left( \sigma \pi(t) \right)_{t \geq 0}$ has the same law as $(\pi(t))_{t \geq 0}$. It is clear that this instrinsic definition of exchangeability agrees with the usual definition of exchangeability in the setting that a single-type coalescent process takes values in the set of partitions of $\mathbb{N}$ and starts from the singletons.

Turning to the $d$-type case, suppose we have a $d$-type coalescent process $(\bpi(t))_{t \geq 0}$ on a set $S$. Let $\bpi(0) = (\pi_1(0),\ldots,\pi_d(0))$ denote the initial $d$-type partition of $S$, and suppose each $\pi_i(0)$ is countable. 
Note that since $(\bpi(t))_{t \geq 0} =(\pi_1(t),\ldots,\pi_d(t))_{t \geq 0}$ is a $d$-type coalescent process, each element of $\pi_i(t)$ is a union of some elements (themselves subsets of $S$) of the initial underlying partition $\widehat{\bpi(0)}:= \pi_1(0) \cup \ldots \cup \pi_d(0)$. Let $\bsig := (\sigma_1,\ldots,\sigma_d)$ be a $d$-tuple of compact bijections $\sigma_i:\pi_i(0) \to \pi_i(0)$. For each $ t \geq 0$, we may create a new $d$-type partition $\bsig \bpi(t) = (\bsig \pi_1(t),\ldots,\bsig \pi_d(t))_{t \geq 0}$ of $S$ by declaring, for each $i,i'$ and each pair of elements $\Gamma \in \pi_i(0),\Gamma' \in \pi_{i'}(0)$ (again note, $\Gamma,\Gamma'$ are subsets of $S$)
\begin{align*}
&\text{$\Gamma,\Gamma'$ are subsets of the same block of $\sigma \pi_j(t)$}\\
\iff &\text{$\sigma_i(\Gamma),\sigma_{i'}(\Gamma')$ are subsets of the same block of $\pi_j(t)$}.
\end{align*}
Clearly the new process $(\bsig\bpi (t))_{t \geq 0}$ is a $d$-type coalescent process. 
We say a Markovian $d$-type coalescent process on $S$ is \emph{exchangeable} if $(\bsig \bpi(t))_{t \geq 0}$ has the same law as $(\bpi(t))_{t \geq 0}$ for every $d$-tuple $(\sigma_1,\ldots,\sigma_d)$ of compact bijections $\sigma_i:\bpi_i(0) \to \bpi_i(0)$. In other words, the law of the process is invariant under permutations of initial blocks of the same type. 

\subsection{$d$-type exchangeability and de Finetti's theorem} \label{sec:df}

\color{black}

Recall that Theorem \ref{thm:algebraic} is concerned with arrays $(\mu_{\mathbf{b},\mathbf{k}} : \mathbf{k} \leq \mathbf{b} , \mathbf{k},\mathbf{b} \in \mathbb{Z}_{ \geq 0}^d - B_{\bm\ell} )$ of non-negative reals satisfying the recursion
\begin{align} \label{eq:recursion3}
\mu_{\mathbf{b},\mathbf{k}} = \mu_{\mathbf{b} + \mathbf{e}_j,\mathbf{k}}  + \mu_{\mathbf{b}+\mathbf{e}_j,\mathbf{k}+\mathbf{e}_j} \qquad j \in \{1,\ldots,d\}.
\end{align}
Theorem \ref{thm:algebraic} states that for any such array satisfying \eqref{eq:recursion3} there exist a collection of non-negative numbers $(\rho_x : x \in M_{\bm\ell})$ indexed by the minimal elements $M_\ell$ of $\mathbb{Z}_{\geq 0}^d - B_{\bm\ell}$ and a measure $J$ defined on the unit cube $[0,1]^d$ and not charging zero such that the $\{\rho_\bx\}$ and $J$ determine the entire array through the equation
\begin{align*}
\mu_{ \mathbf{b}, \mathbf{k}} = \sum_{ \mathbf{x} \in M_{\bm\ell} } \mathrm{1}_{ \mathbf{k} = \mathbf{x} } \rho_{\bx} + \int_{[0,1]^d} s^\mathbf{k} (1 - s)^{\mathbf{b} - \mathbf{k}} J( \mathrm{d} s). 
\end{align*}
One of our key tools in proving Theorem \ref{thm:algebraic} is a de Finetti theorem for separately exchangeable sequences. Namely, we say a collection $\mathcal{X} := (X_{i,j} : 1 \leq i \leq d, j \in \mathbb{N})$ of $d$-sequences of random variables taking values in a Borel measurable space $S$ is separately exchangeable if, for every $d$-tuple $\sigma := (\sigma_i)_{i = 1}^d$ of permutations $\sigma_i:\mathbb{N} \to \mathbb{N}$ the individually permuted sequences $\sigma \mathcal{X} := \left( X_{i, \sigma_i(j)} : 1 \leq i \leq d, j \in \mathbb{N}\right)$ have the same law as $\mathcal{X}$. 
Let $\mathcal{M}_1(S)$ denote the set of probability measures on $S$ endowed with the $\sigma$-field generated by projection maps $\mu \mapsto \mu(B)$ for $B$ Borel, and let $\mathcal{M}_1(S)^d$ denote the associated $d$-fold product space. The de Finetti theorem for separately exchangeable sequences states that if $\mathcal{X}$ is separately exchangeable under a probability measure $P$, then there is a probability measure $\Pi$ on $\mathcal{M}_1(S)^d$ such that 
\begin{align} \label{eq:separate dF}
\mathbb{P} \left( \bigcap_{ i = 1}^d \bigcap_{ j = 1}^{n_i} \{ X_{i,j} \in \mathrm{d}u_{i,j} \} \right) = \int_{ \mathcal{M}_1(S)^d} \Pi( \mathrm{d} \nu ) \prod_{ i =1 }^d \prod_{ j = 1}^{n_i} \nu_i( \mathrm{d} u_{i,j} ).
\end{align}
See Section 1.1 of Kallenberg \cite{kallenberg2006probabilistic} for details. We warn the reader that at times the term \emph{separate exchangeability} may refer to a different concept in the context of $d$-dimensional arrays, as in Chapter 7 of \cite{kallenberg2006probabilistic}. 

In the special case where the random variables $X_{i,j}$ in a separately exchangeable random sequence $\mathcal{X} := (X_{i,j} : 1 \leq i \leq d, j \in \mathbb{N} )$ take values in $\{0,1\}$, each measure $\nu_i$ occuring in \eqref{eq:separate dF} takes the form $s_i \delta_0 + (1 - s_i) \delta_1$. In particular, in this setting the de Finetti theorem for separately exchangeable sequences reads as saying there exists a probability measure $\Pi$ on $[0,1]^d$ such that
\begin{align} \label{eq:seq rep}
\mathbb{P} \left(  \bigcap_{ i = 1}^d \left\{ X_{i,1} = 1,\ldots,X_{i,k_i} = 1 , X_{i,k_i + 1 } = 0,\ldots,X_{i,b_i} = 0 \right\}  \right) = \int_{[0,1]^d} \Pi( \mathrm{d} s ) s^\bk (1-s)^{\bb - \bk} .
\end{align}
From this de Finetti theory for separately exchangeable sequences, we now extract the following key algebraic fact, which the reader will note is an analogue of Theorem \ref{thm:algebraic} in the setting in which $\ell = 0 \in \mathbb{Z}_{\geq 0}^d$, i.e. the box $B_\ell$ consists just of the origin. \color{black}

\begin{lemma} \label{lem:newlem}
Let $(\mu_{\bb,\bk} : \bk \leq \bb : \bk,\bb \in \mathbb{Z}_{ \geq 0}^d )$ be an array of non-negative reals satisfying the recursion \eqref{eq:recursion3}. Then there exists a constant $\rho_{\mathbf{0}}$ and a finite measure $J$ on $[0,1]^d$ not charging zero such that
\begin{align*}
\mu_{ \mathbf{b}, \mathbf{k}} = \mathrm{1}_{ \bk = \mathbf{0}} \rho_{\mathbf{0}} + \int_{[0,1]^d} s^\mathbf{k} (1 - s)^{\mathbf{b} - \mathbf{k}} J( \mathrm{d} s). 
\end{align*}
\end{lemma}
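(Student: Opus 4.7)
The plan is to reduce the lemma to the de~Finetti theorem for separately exchangeable $\{0,1\}$-valued arrays stated in \eqref{eq:seq rep}. Assume first that $\mu_{\bm 0, \bm 0} > 0$; the degenerate case $\mu_{\bm 0, \bm 0} = 0$ forces every $\mu_{\bb,\bk} = 0$ by non-negativity and the recursion, and the conclusion then holds trivially with $\rho_{\bm 0} = 0$ and $J \equiv 0$. Normalising $p_{\bb, \bk} := \mu_{\bb, \bk}/\mu_{\bm 0, \bm 0}$, the goal is to realise these numbers as the cylinder probabilities of a natural separately exchangeable law on $\{0,1\}^{\mathbb{N}_d}$ to which \eqref{eq:seq rep} can be applied.

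The heart of the argument is a short combinatorial identity: I would show that $F(\bb) := \sum_{\bk \leq \bb} \binom{\bb}{\bk}\,\mu_{\bb, \bk}$ is constant in $\bb$ and therefore equal to $F(\bm 0) = \mu_{\bm 0, \bm 0}$. To check $F(\bb + \mathbf{e}_j) = F(\bb)$, one expands $\binom{\bb + \mathbf{e}_j}{\bk}$ in the $j$-th coordinate by Pascal's rule, reindexes the half of the sum coming from $\binom{b_j}{k_j-1}$ by $\bk \mapsto \bk + \mathbf{e}_j$, and then collapses the result using the recursion \eqref{eq:recursion3}. As a consequence, $\sum_{\bk \leq \bb} \binom{\bb}{\bk}\, p_{\bb, \bk} = 1$ for every $\bb$, so assigning weight $p_{\bb, \bk}$ to every configuration $x \in \{0,1\}^{b_1 + \cdots + b_d}$ whose rows contain $\bk$ ones defines a bona fide probability on the finite product space. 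Two configurations with identical row-counts receive equal weight by construction, and summing the two extensions of a given configuration in direction $j$ collapses via \eqref{eq:recursion3} to the $(\bb-\mathbf{e}_j)$-level distribution, giving both row-exchangeability and Kolmogorov consistency at a single stroke.

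Kolmogorov's extension theorem then produces a probability measure on $\{0,1\}^{\mathbb{N}_d}$ making the coordinate array $\mathcal{X} = (X_{i,j})$ separately exchangeable with
$$\mathbb{P}\!\left( X_{i,1} = \cdots = X_{i,k_i} = 1,\ X_{i,k_i+1} = \cdots = X_{i, b_i} = 0,\ \forall i \right) = p_{\bb, \bk}.$$
Applying the $\{0,1\}$-valued de~Finetti representation \eqref{eq:seq rep} to $\mathcal{X}$ yields a probability measure $\Pi$ on $[0,1]^d$ such that $p_{\bb,\bk} = \int s^{\bk}(1-s)^{\bb - \bk}\,\Pi(\mathrm{d}s)$. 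Decomposing $\Pi = \Pi(\{\bm 0\})\,\delta_{\bm 0} + \Pi|_{[0,1]^d \setminus \{\bm 0\}}$ and observing that $s^{\bk}(1-s)^{\bb - \bk}$ equals $\ind_{\bk = \bm 0}$ at the origin produces the claimed representation with $\rho_{\bm 0} := \mu_{\bm 0,\bm 0}\,\Pi(\{\bm 0\})$ and $J := \mu_{\bm 0,\bm 0}\,\Pi|_{[0,1]^d \setminus \{\bm 0\}}$; the measure $J$ is finite and does not charge the origin by construction.

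The main obstacle in this plan is the combinatorial identity $F(\bb) \equiv \mu_{\bm 0,\bm 0}$, which is what bridges the purely algebraic recursion \eqref{eq:recursion3} and the probabilistic framework required to invoke de~Finetti. Once that normalisation is in hand, the remaining steps (Kolmogorov extension, verification of exchangeability, and splitting off the atom at $\bm 0$) are routine, and the lemma follows by direct translation of \eqref{eq:seq rep}.
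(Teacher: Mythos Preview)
Your proposal is correct and follows essentially the same route as the paper: normalise by $\mu_{\bm 0,\bm 0}$, build a separately exchangeable $\{0,1\}$-valued array whose cylinder probabilities are the $p_{\bb,\bk}$, invoke the de~Finetti representation \eqref{eq:seq rep}, and split off the atom at the origin. The one difference is expository: you single out the identity $\sum_{\bk\le\bb}\binom{\bb}{\bk}\mu_{\bb,\bk}\equiv\mu_{\bm 0,\bm 0}$ as ``the main obstacle'', whereas the paper simply asserts that Kolmogorov consistency is immediate from the recursion --- in fact the normalisation is automatic once consistency holds (total mass is preserved under marginalisation and equals $1$ at level $\bm 0$), so your Pascal-rule computation, while perfectly valid, is not where the real work lies.
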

\begin{proof}

Let $(\mu_{\mathbf{b},\mathbf{k}} : \mathbf{k} \leq \mathbf{b} \in \mathbb{Z}_{ \geq 0 }^d)$ be an array of non-negative reals satisfying \eqref{eq:recursion3}. By dividing through by $\mu_{ \mathbf{0},\mathbf{0}}$, we may assume without loss of generality for the remainder of the proof that $\mu_{\mathbf{0},\mathbf{0}}=1$. We note now as a consequence of \eqref{eq:recursion3} that we have $0 \leq \mu_{\mathbf{b},\mathbf{k}} \leq 1$ for all $\mathbf{k} \leq \mathbf{b} \in \mathbb{Z}_{ \geq 0 }^d$.

Consider now the separately exchangeable random sequence $\mathcal{X} = (X_{i,j} : 1 \leq i \leq d , j \in \mathbb{N} )$ with the property that $\mathbb{P}( E_{\mathbf{b},\mathbf{k}} ) = \mu_{\mathbf{b},\mathbf{k}}$, where 
\begin{align*}
E_{\mathbf{b},\mathbf{k}} = \bigcap_{ i = 1}^d \{ X_{i,1} = 1,\ldots,X_{i,k_i} = 1, X_{i,k_i+1}=0,\ldots,X_{i,b_i} =  0 \}.
\end{align*}
To see that this is a consistent probability measure, we note that the cylinder events $E_{\mathbf{b},\mathbf{k}}$ generate the $\sigma$-algebra for $\mathcal{X}$. Writing $E_{\mathbf{b},\mathbf{k}}$ as a disjoint union
\begin{align*}
E_{\mathbf{b},\mathbf{k}} = E_{\mathbf{b}+\mathbf{e}_j,\mathbf{k}} \sqcup E_{\mathbf{b}+\mathbf{e}_k,\mathbf{k}+\mathbf{e}_j},
\end{align*} 
we see that the equation \eqref{eq:recursion3} ensures consistency, i.e. that $\mathbb{P}(E_{\mathbf{b},\mathbf{k}} ) = \mathbb{P}(E_{\mathbf{b}+\mathbf{e}_j,\mathbf{k}}) + \mathbb{P} (E_{\mathbf{b}+\mathbf{e}_k,\mathbf{k}+\mathbf{e}_j})$. 

\color{black}
In particular we may appeal to \eqref{eq:seq rep}, so that there exists a measure $\Pi$ on $[0,1]^d$ such that
\[ \mu_{\bb,\bk} = \int_{[0,1]^d} s^\bk (1 - s)^{ \bb - \bk} \Pi(\mathrm{d} s). \]
The result follows by writing $\Pi = \rho_\mathbf{0} \delta_0 + J$, where $\delta_0$ is the dirac mass at $0 \in [0,1]^d$ and $J$ is a measure on $[0,1]^d$ not charging zero. 
\end{proof}

\subsection{Proof of Theorem \ref{thm:algebraic}} \label{sec:algebraicproof}

We now prove Theorem \ref{thm:algebraic}.

Let $(\mu_{\mathbf{b},\mathbf{k}} : \mathbf{k} \leq \mathbf{b} , \mathbf{k},\mathbf{b} \in \Gamma )$ be an array of non-negative real numbers satisfying the recursion \eqref{eq:recursion2} and indexed by a cofinite upper set $\Gamma$ with minimal elements $M$. If $\Gamma = \mathbb{Z}_{ \geq 0}^d$ we are done by Lemma \ref{lem:newlem}; here, the unique minimal element of $\Gamma$ is $\mathbf{0}$. 

We now assume without loss of generality for the remainder of this section that $\Gamma$ is a \textbf{proper} cofinite upper subset of $\mathbb{Z}_{\geq 0}^d$, that is, $\mathbb{Z}_{\geq 0}^d-\Gamma$ is non-empty. The following quick lemma tells us about the minimal elements of such $\Gamma$.

\begin{lemma} \label{lem:minchar}
let $\Gamma$ be a proper cofinite upper subset of $\mathbb{Z}_{\geq 0}^d$, and let $M$ denote the minimal elements of $\Gamma$. Then for each $1 \leq i \leq d$, $M$ contains an element of the form $q_i \mathbf{e}_i$, where $q_i$ is in $\mathbb{Z}_{ >0}$.
\end{lemma}
\begin{proof}
Fix $1 \leq i \leq d$. We note that since $\Gamma$ is cofinite, $\Gamma$ contains an element of the form $p \mathbf{e}_i$, since otherwise, $\mathbb{Z}_{\geq 0}^d - \Gamma$ would contain the infinite set $\{ p \mathbf{e}_i : p \in \mathbb{Z}_{ \geq 0} \}$. Let $q_i \mathbf{e}_i$ denote the least such element. Then $q_i \mathbf{e}_i$ is a minimal element of $\mathbb{Z}_{\geq 0}^d$. Finally, $q_i > 0$, because otherwise $\Gamma$ would be all of $\mathbb{Z}_{\geq 0}^d$, and therefore not a proper subset of $\mathbb{Z}_{\geq 0}^d$.
\end{proof}

For each minimal element $\bx \in M$ we define a new array $(\mu^\bx_{\bb,\bk} : \bk \leq \bb \in \mathbb{Z}_{ \geq 0}^d  )$ by setting
\begin{align} \label{eq:shift}
\mu^\bx_{ \bb, \bk} := \mu_{ \bb + \bx , \bk + \bx } .
\end{align} 
Thanks to the fact that $\Gamma$ is an upper set, and hence contains all $\bx \leq \bk \leq \bb$, the new array $(\mu^\bx_{\bb,\bk} : \bk \leq \bb \in \mathbb{Z}_{ \geq 0}^d  )$ is indexed by all of $\mathbb{Z}_{\geq 0}$ (and not just a subset thereof). It follows that $(\mu^\bx_{\bb,\bk})$ lends itself to immediate characterisation by Lemma \ref{lem:newlem}: namely, there exists a $\rho(\bx) \geq 0$ and a finite measure $J^\bx$ on $[0,1]^d$ not charging zero such that
\begin{align} \label{eq:t array}
\mu^\bx_{ \bb, \bk } = \rho(\bx) \mathrm{1}_{ \bk = 0 } + \int_{[0,1]^d} s^\bk ( 1-s)^{\bb - \bk} J^\bx ( \mathrm{d} s). 
\end{align} 
Our next lemma gives us some information about the relationships between the measures $\{J^\bx : \bx \in M \}$ created in this construction.

\begin{lemma} \label{lem:scale props}
For minimal elements $\bx$ of $\Gamma$, let $J^\bx$ denote the measures defined in \eqref{eq:t array}. Then for $\bx, \by \in M$ we have: 
\begin{enumerate}
\item The equality
\begin{align} \label{eq:measure comp}
 s^{ \bx \vee \by - \bx} J^{\bx}(\mathrm{d}s) = s^{ \bx \vee \by - \by}  J^{\by}(\mathrm{d}s) 
\end{align}
of measures on $[0,1]^d$. Here $\bx \vee \by := (\max\{x_1,y_1\},\ldots,\max \{ x_d,y_d \} )$ denotes the supremum of $\bx$ and $\by$.

\item
The implication
\begin{align*}
x_j > 0 \implies \text{$J^\bx= 0$ on $\{s_j = 0 \}$}.
\end{align*}
\end{enumerate}
\end{lemma}

\begin{proof}
We investigate the overlap between translated arrays associated with distinct minimal elements $\bx$ and $\by$ of $M$. Consider that from \eqref{eq:shift}, for any $\bb \geq \bk \geq \bx \vee \by$ we have 
\begin{align} \label{eq:u array}
\mu_{ \bb , \bk} = \mu^\bx_{\bb - \bx, \bk - \bx } = \mu^\by_{\bb-\by,\bk-\by}. 
\end{align}
Note that if $\bx$ and $\by$ are distinct minimal elements, and $\bk \geq \bx \vee \by$, then $\bk$ is distinct from both $\bx$ and $\by$. Using \eqref{eq:t array} in \eqref{eq:u array}, we see that for $\bb \geq \bk \geq \bx \vee \by$ we have 
\begin{align} \label{eq:n array}
\int_{[0,1]^d} s^{ \bk - \bx} (1 - s)^{ \bb -\bk } J^{\bx}(\mathrm{d}s) = \int_{[0,1]^d} s^{ \bk - \by} (1 - s)^{ \bb - \bk } J^{\by}(\mathrm{d}s).
\end{align}
By setting $\bk = \bx \vee \by$ and $\bb = \bx \vee \by + \bj$ in \eqref{eq:n array}, for all $\bj \in \mathbb{Z}_{\geq 0}^d$ we have 
\begin{align} \label{eq:p array}
\int_{[0,1]^d} s^{ \bx \vee \by - \bx} (1 - s)^{ \bj } J^{\bx}(\mathrm{d}s) = \int_{[0,1]^d} s^{ \bk \vee \by - \by } (1 - s)^{ \bj } J^{\by}(\mathrm{d}s).
\end{align}
Since the moments of a measure on $[0,1]^d$ determine the measure (see e.g. \cite[Theorem 1.1.2]{shohat1943problem}) in order for \eqref{eq:p array} to hold for all $\bj$ in $\mathbb{Z}_{ \geq 0}^d$ it must be the case that we have the equality
\begin{align*} 
 s^{ \bx \vee \by - \bx} J^{\bx}(\mathrm{d}s) = s^{ \bx \vee \by - \by}  J^{\by}(\mathrm{d}s) 
\end{align*}
of measures on $[0,1]^d$, establishing the first part of the lemma.

We turn to proving the second part of the lemma. Recall from Lemma \ref{lem:minchar} that $M$ contains elements of the form $q_i\mathbf{e}_i$ (with $q_i > 0$) for each $1 \leq i \leq d$. Let $\bx$ and $q_i\mathbf{e}_i$ be distinct minimal elements. Then $q_i > x_i$, since otherwise we would have $\mathbf{q}_i \mathbf{e}_i < \bx$, contradicting the minimality of $\bx$. Now
\begin{align*}
 s^{ \bx \vee q_i\mathbf{e}_i - \bx} = s_i^{q_i-x_i} \qquad \text{and} \qquad s^{\bx \vee q_i \mathbf{e}_i - q_i\mathbf{e}_i} = \prod_{j\neq i}s_j^{x_j},
\end{align*}
so that in particular from \eqref{eq:measure comp} we have
\begin{align} \label{eq:scales}
s_i^{q_i-x_i}J^\bx(\mathrm{d}s) = \prod_{j\neq i} s_j^{x_j} J^{q_i\mathbf{e}_i}(\mathrm{d}s) \qquad \text{on $[0,1]^d$.}
\end{align}
From \eqref{eq:scales}, we see that for $j \neq i$ for which $x_j > 0$, the measure $s_i^{q_i-x_i}J^\bx(\mathrm{d}s)$ is zero on the subset $\{s_j=0\}$ of $[0,1]^d$. In particular, for all $j \neq i$ with $x_j > 0$, $J^{\bx}$ is zero on $\{s_j =0,s_i > 0\}$. It follows that for such $\bx$, $J^{\bx}$ is zero on $\cup_{i \neq j} \{ s_j = 0,s_i > 0 \} = \{ s_j = 0 \} - \{0\}$. Finally, since $J^\bx$ has no mass on $\{0\}$, it follows that for all minimal $\bx$ we have 
\begin{align*}
x_j > 0 \implies \text{$J^\bx= 0$ on $\{s_j = 0 \}$},
\end{align*}
completing the proof.
\end{proof}

We note that thanks to part (2) of Lemma \ref{lem:scale props}, $J^\bx$ has no support on $\{s_j = 0\}$ for any $j$ with $x_j > 0$, and it follows that we can define a measure $s^{-\bx}J^\bx(\mathrm{d}s)$ on $[0,1]^d - \{0\}$. Our next lemma further studies the domains and relations between the different measures $\{ J^\bx : \bx \in M \}$. 
\begin{lemma} \label{lem:Jdef}
Define $D_\bx := \cap_{j : x_j > 0 } \{s_j > 0 \}$. Then
\begin{enumerate}
\item $\bigcup_{ \bx \in M } D_\bx = [0,1]^d - \{0\}$. 
\item $s^{ - \bx} J^\bx(\mathrm{d}s)  = s^{ - \by} J^\by (\mathrm{d}s)$ on $D_\bx \cap D_\by$.
\end{enumerate}
\end{lemma}

\begin{proof}
To prove the first part of the lemma, we note from Lemma \ref{lem:minchar} that $M$ contains elements of the form $q_i\mathbf{e}_i$ for each $1 \leq i \leq d$.
For such elements we have $D_{q_i\mathbf{e}_i} = \{s_i > 0\}$. In particular,
\begin{align*}
\bigcup_{ \bx \in M } D_\bx \supseteq \bigcup_{ i = 1}^d D_{q_i\mathbf{e}_i} = \bigcup_{i=1}^d \{s_i > 0\} = [0,1]^d - \{0\}.
\end{align*}
As for the next part of the lemma, we note from part 1 of Lemma \ref{lem:scale props} we have 
\begin{align*} 
 s^{ \bx \vee \by - \bx} J^{\bx}(\mathrm{d}s) = s^{ \bx \vee \by - \by}  J^{\by}(\mathrm{d}s)  \qquad \text{on $[0,1]^d$}.
\end{align*}
In particular, for all $s \in D_\bx \cap D_\by = \{ s_j > 0 \text{ for every $j$ such that $x_j \vee y_j > 0$}\}$ we have  
\begin{align*} 
 s^{ - \bx} J^{\bx}(\mathrm{d}s) = s^{ - \by}  J^{\by}(\mathrm{d}s) ,
\end{align*}
as required.
\end{proof}

We are now ready to prove Theorem \ref{thm:algebraic}.

\begin{proof}[Proof of Theorem \ref{thm:algebraic}]
By Lemma \ref{lem:Jdef}, we can define a measure $J$ on $[0,1]^d - \{0\}$ by setting
\begin{align} \label{eq:saw3}
J(\mathrm{d}s) := s^{ - \bx} J^\bx(\mathrm{d}s) \qquad \text{on $D_\bx$}.
\end{align}
The first part of Lemma \ref{lem:Jdef} guarantees that a definition for $J$ is given on all of $[0,1]^d - \{0\}$; the latter part guarantees that $J$ is well defined, i.e.\ that the definitions given on the overlapping sets $D_\bx$ and $D_\by$ are consistent. 

With this choice of $J$ constructed from $\{J^\bx : \bx \in M\}$, and with $\{ \rho(\bx) :\bx \in M\}$ given in \eqref{eq:t array}, we define an array
\begin{align} \label{eq:lambda}
\lambda_{\bb,\bk} := \sum_{ \bx \in M} \rho(\bx) \mathrm{1}_{\bk=\bx} + \int_{[0,1]^d} s^{\bk}(1-s)^{\bb-\bk}J(\mathrm{d}s).
\end{align}
We now claim that 
\begin{align} \label{eq:lamu}
\lambda_{\bb,\bk} = \mu_{\bb,\bk} \qquad \text{for all $\bb \geq \bk \in \mathbb{Z}_{\geq 0}^d$}.
\end{align}
With a view to proving \eqref{eq:lamu} we note that for any $\bk \leq \bb$ in $\mathbb{Z}_{\geq 0}^d$, there exists some $\bx$ in $M$ such that $\bx \leq \bk$. In particular, by \eqref{eq:shift} and \eqref{eq:t array} on the one hand we have 
\begin{align} \label{eq:mu2}
\mu_{\bb,\bk} = \rho(\bx) \mathrm{1}_{\bk=\bx} + \int_{[0,1]^d} s^{ \bk - \bx} (1 - s)^{ \bb -\bk } J^{\bx}(\mathrm{d}s).
\end{align}
On the other hand, by the minimality of $\bx$, $\bk \geq \bx$ guarantees that $\bk$ is distinct from each minimal $\by \neq \bx$, so that by \eqref{eq:lambda} for $\bk \geq \bx$ we have 
\begin{align} \label{eq:lambda2}
\lambda_{\bb,\bk} :=  \rho(\bx) \mathrm{1}_{\bk=\bx} + \int_{[0,1]^d} s^{\bk}(1-s)^{\bb-\bk}J(\mathrm{d}s).
\end{align}
In light of \eqref{eq:mu2} and \eqref{eq:lambda2}, in order to prove \eqref{eq:lamu} it remains to show that for all $\bb \geq \bk \geq \bx$ we have
\begin{align} \label{eq:saw}
 \int_{[0,1]^d} s^{ \bk - \bx} (1 - s)^{ \bb -\bk } J^{\bx}(\mathrm{d}s) =  \int_{[0,1]^d} s^{\bk}(1-s)^{\bb-\bk}J(\mathrm{d}s).
\end{align}
Here we note that since $\bk \geq \bx$, the integrand $ s^{\bk}(1-s)^{\bb-\bk}$ on the right-hand-side of \eqref{eq:saw} is supported on $D_\bx := \cap_{ j : x_j > 0 } \{s_j >0 \}$. By part (2) of Lemma \ref{lem:scale props}, the measure $J^{\bx}$ is also supported on $\bx$. Thus we may reduce \eqref{eq:saw} to 
\begin{align*}
 \int_{D_\bx} s^{ \bk - \bx} (1 - s)^{ \bb -\bk } J^{\bx}(\mathrm{d}s) =  \int_{D_\bx} s^{\bk}(1-s)^{\bb-\bk}J(\mathrm{d}s).
\end{align*}
The equation \eqref{eq:lamu} now follows from using the definition \eqref{eq:saw3}. 

It follows from \eqref{eq:lamu} and \eqref{eq:lambda} that there exists a function $\rho:M \to [0,\infty)$ and a measure $J$ on $[0,1]^d$ not charging zero such that 
\begin{align*} 
\mu_{\bb,\bk} := \sum_{ \bx \in M} \rho(\bx) \mathrm{1}_{\bk=\bx} + \int_{[0,1]^d} s^{\bk}(1-s)^{\bb-\bk}J(\mathrm{d}s),
\end{align*}
which is the principal claim of Theorem \ref{thm:algebraic}. In order to complete the proof of Theorem \ref{thm:algebraic}, it remains to show that the integrability condition 
\begin{align} \label{eq:zib}
\int_{[0,1]^d} s^\bx J(\mathrm{d}s) < \infty \qquad \text{$\bx \in \Gamma$}
\end{align}
holds. Since every $\bx \in \Gamma$ is greater than or equal to some $\bx' \in M$, in which case $s^{\bx'} \leq s^{\bx}$ on $[0,1]^d$, it is sufficient to establish \eqref{eq:zib} for just $\bx$ in $M$. This follows immediately from the fact that by definition, $s^{\bx}J(\mathrm{d}s) = J^{\bx}(\mathrm{d}s)$, and the latter is a finite measure according to its construction in \eqref{eq:t array}.
\end{proof}

\color{black}
\subsection{Proof of Theorem \ref{thm:main}} \label{sec:mainproof}

We now prove our main classification result, Theorem \ref{thm:main}.
\begin{proof}[Proof of Theorem \ref{thm:main}]

First we note that by construction, one direction of Theorem \ref{thm:main} is trivial. Indeed, by construction every $d$-type $\Lambda$-coalescent is clearly exchangeable, consistent, and has asynchronous mergers. 

We turn to proving that every exchangeable, consistent and asynchronous coalescent process with these properties is a $d$-type $\Lambda$-coalescent. Without loss of generality, we let $(\bpi(t))_{t \geq 0} = (\pi^{(1)}(t),\ldots,\pi^{(d)}(t))_{t \geq 0}$ be a coalescent process taking values in the set of $d$-type partitions of the set $S = \mathbb{N}_d := \{(i,j) : 1 \leq i \leq d , j \in \mathbb{N} \}$ with these properties.. Throughout the proof, for multi-indices $\mathbf{n} = (n_1,\ldots,n_d)$ let $[\mathbf{n}]$ denote the subset of $\mathbb{N}_d$ given by 
\[ [\mathbf{n}] := \{ (i,j) : 1 \leq i \leq d, 1 \leq j \leq n_i \}. \]
By consistency, the projection of $\bpi(t)|_{[\mathbf{n}]}$ onto $[\mathbf{n}]$ is a Markov chain in its own filtration, taking values in the set of $d$-type partitions of $[\mathbf{n}]$. Let $\bm\pi$ and $\bm\tau$ be distinct $d$-type partitions of $[\mathbf{n}]$ and let 
\begin{align*}
\lambda_{\bm\gamma \to \bm\tau} := \lim_{ h \downarrow 0} \frac{1}{h}\mathbb{P} \left( \bpi(t+h)|_{[\mathbf{n}]} = \bm\tau  | \bpi(t)|_{[\mathbf{n}]} = \bm\gamma \right).
\end{align*}
Since $\bpi(t)$ is an asynchronous coalescent process, so is $\bpi(t)|_{[\mathbf{n}]}$, and hence the quantity  $\lambda_{ \bm\gamma \to \bm\tau}$ may only be non-zero when $\bm \tau$ may be obtained from $\bm \gamma$ by merging several blocks of the underlying partition of $\bm\gamma$  to form a single block of some type $i$ in $\bm\tau$, and leaving the other blocks and their types the same. 

We now make a key observation that follows from the exchangeability of the process: the rate $\lambda_{\bm\gamma \to \bm \tau}$ depends only on the types $\mathbf{b}$ and $\mathbf{k}$ of the underlying blocks of $\bm \gamma$ and $\bm \tau$, as well as the type $i$ of the newly merged block, and is in particular independent of the underlying size $\mathbf{n}$ of the projection. Let $\lambda_{ \bb, \bk \to i }$ denote the corresponding rate.

We now claim that the rates $\lambda_{ \bb, \bk \to i }$ satisfy the recursion
\begin{align} \label{eq:recursion4}
\lambda_{ \bb, \bk \to i }=  \lambda_{ \bb+\mathbf{e}_j, \bk \to i } + \lambda_{ \bb+\mathbf{e}_j, \bk+ \mathbf{e}_j \to i }.
\end{align}

To see that \eqref{eq:recursion4} holds, suppose that at some moment $t$, for a pair of multi-indices $\mathbf{m} \geq \mathbf{n}$, the smaller projection $\bpi(t)|_{[\mathbf{n}]}$ has $\mathbf{b}$ blocks and the larger projection $\bpi(t)|_{[\mathbf{m}]}$  has $\mathbf{b}+ \mathbf{e}_j$ blocks. Given any subcollection of $\mathbf{k}$ of the $\mathbf{b}$ blocks in the smaller projection, there are two possible $\to i$ merger events that can happen for the larger projection that are witnessed by the smaller projection as $\mathbf{k}$ blocks merging to a single block of type $i$. One of these events involves $\mathbf{k} + \mathbf{e}_j$ blocks merging out of the $\mathbf{b} + \mathbf{e}_j$, the other involving $\mathbf{k}$ of $\mathbf{b} + \mathbf{e}_j$ merging. The equation \eqref{eq:recursion4} follows.  

In particular, since the array $( \lambda_{ \bb, \bk \to i } )$ is indexed by $\bk \leq \bb \in \mathbb{Z}_{ \geq 0}^d - B_\ell$, we are in the setting of Theorem \ref{thm:algebraic}, so that by Theorem \ref{thm:algebraic} there constants $\rho_{ j \to i}$ and $\rho_{ii \to i}$ and measures $Q_{\to i}$ such that 
\begin{align} \label{eq:merger rates d 2}
\lambda_{\mathbf{b},\mathbf{k} \to i} = \sum_{j \neq i} \mathrm{1}_{ \mathbf{k} = \mathbf{e}_j } \rho_{j \to i} + \mathrm{1}_{ \mathbf{k} = 2 \mathbf{e}_i } \rho_{ii \to i} + \int_{[0,1]^d} s^{\mathbf{k}}(1 - s)^{\mathbf{b} - \mathbf{k}} Q_{ \to i} ( \mathrm{d} s),
\end{align}
completing the proof of Theorem \ref{thm:main}.
\end{proof}

\section{Coming down from infinity: proofs} \label{sec:coming down}

Let $\Lambda = ( \rho_{ j \to i}, \rho_{ii \to i }, Q_{ \to i} : 1 \leq i \neq j \leq d )$. In this section  we prove Theorem \ref{thm:coming down}, which states that if $\bpi(t)$ is a $\Lambda$-coalescent starting with infinitely many blocks of each colour, then $\bpi(t)$ comes down from infinity if and only if 
\begin{align*}
\int_{ s}^\infty \frac{\mathrm{d} q }{ \psi_i(q)} < \infty \qquad \text{for all $s >0$ and $i \in \{1,\ldots,d\}$}
\end{align*}  
where $\psi_i$ is defined in \eqref{eq:proj processing}. 

It is useful to consider the type $i$ \emph{projected coalescent}, the single-type $\Lambda$-coalescent with pairwise mergers at rate $\rho_{ii \to i}$, and merger measure on $[0,1]$ given by the projection $\bar{Q}_{ \to i}$ of $Q_{\to i}$ onto $(0,1]$ under the map $(s_1,\ldots,s_d) \mapsto s_i$. In other words, for Borel subsets $A$ of $(0,1]$ 
\begin{align*}
\bar{Q}_{ \to i }(A) := \int_{ [0,1]^d } \mathrm{1} \{ s_i \in A \} Q_{\to i }( \mathrm{d} s).
\end{align*}
More explicitly, the type $i$ projected coalescent is the single-type $\Lambda$-coalescent associated with the measure
\begin{align*}
\bar{\Lambda}_i(\mathrm{d}s) = \rho_{ii \to i } \delta_0( \mathrm{d}s) + s^{-2} \bar{Q}_{\to i }( \mathrm{d} s) \qquad s \in [0,1].
\end{align*}
The $\bar{Q}_{ \to i }$-coalescent may be thought of as the single-type $\Lambda$-coalescent that may be obtained from the multi-type $\Lambda$-coalescent starting with only blocks of type $i$, and `switching off' any mergers or changes of colour of type $i$ blocks into other colours.

We begin by proving the `only if' direction of the proof. 
\subsection{Proof of the `only if' direction in Theorem \ref{thm:coming down}}

Fix a $k \in \{1,\ldots,d\}$. In this section we show that if $(\bpi(t))_{t \geq 0}$ is a $\Lambda$-coalescent that comes down from infinity, then $\int_s^{\infty} \frac{ \mathrm{d}q}{\psi_k(q)} < \infty$ for each $s > 0$. 

To this end, let $\Lambda = ( \rho_{ j \to i}, \rho_{ii \to i }, Q_{ \to i} : 1 \leq i < j \leq d )$, and suppose $(\bpi(t))_{t \geq 0} = (\pi_1(t),\ldots,\pi_d(t))_{t \geq 0}$ is a $\Lambda$-coalescent that comes down from infinity. Suppose further that $(\bpi(t))_{t \geq 0}$ starts from the $d$-type partition of $\mathbb{N}$ with
\begin{align} \label{eq:kdd}
\pi_k(t) = \{ \{ n \} : n \in \mathbb{N} \}, \qquad \pi_i(0) = \varnothing \text{ for $i \neq k$}. 
\end{align}
That is, we begin with the singletons of $\mathbb{N}$, each with type $k$. 

Consider now that the $k^{\text{th}}$ component of $(\bpi(t))_{t \geq 0}$ is not a Markov process in its own filtration. Indeed, while the type $k$ blocks may merge and also change colour away from type $k$ according to Markovian dynamics, there are also new type-$k$ blocks being created at rates that depend on the number of blocks of other types in the system. It is, however, possible to construct a Markov process from the type $k$ blocks of $(\bpi(t))_{t \geq 0}$, by taking a version of the process $\pi_k(t)$ in which we do not allow `new' blocks of type $k$ to appear. We encourage the reader to take a glance at Figure \ref{fig:ring} to garner an idea before we give a precise definition.

With our multitype $\Lambda$-coalescent $(\bpi(t))_{t \geq 0}$ on $\mathbb{N}$ we can associate a function $f_t:\mathbb{N} \to \{1,\ldots,d\}$ by letting $f_t(n)$ denote the colour of the block containing $n$ at time $t$, in other words 
\begin{align*}
f_t(n) = i \iff \text{The block of $\bpi(t)$ containing $n$ has type $i$}.
\end{align*}
Given a block $\Gamma$ of $\pi_k(t)$, we define the subset $\mathrm{Always}_k(\Gamma,t)$ of $\Gamma$ to be the set of elements who were always contained in a block of type $k$ leading up to time $t$, i.e. 
\begin{align*}
\mathrm{Always}_k(\Gamma,t) := \{ n \in \Gamma :  f_s(n) = k ~\forall s \in [0,t] \} \qquad \Gamma \in \pi_k(t).
\end{align*}
We now define $\mathring{\pi}_k(t)$ to be the set of non-empty blocks that arise in this way, i.e.
\begin{align*}
\mathring{\pi}_k(t) := \left\{ \mathrm{Always}_k(\Gamma,t) \text{ nonempty} : \Gamma \in \pi_k(t) \right\} .
\end{align*}
$\mathring{\pi}_k(t)$ is a partition of a subset of $\mathbb{N}$, and the cardinality of the collection $\mathring{\pi}_k(t)$ is at most that of $\pi_k(t)$. 
Clearly $\mathring{\pi}_k(0) = \pi_k(0)$. Since $(\bpi(t))_{t \geq 0}$ comes down from infinity, we have $\mathbb{P} ( \# \pi_k(t) < \infty~ \forall t = 1)$, and as a result, $\mathbb{P} ( \# \mathring{\pi}_k(t) < \infty ~ \forall t = 1)$.

When $(\kappa(t))_{t \geq 0}$ is a stochastic process such that each $\kappa(t)$ is equal to a collection of subsets of $\mathbb{N}$, we write
\begin{align} \label{eq:restriction}
\kappa(t)|_n := \{ \Gamma \cap [n] \text{ nonempty} : \Gamma \in \kappa(t) \} ,
\end{align}
for its projection onto $\{1,\ldots,n\}$. Figure \ref{fig:ring} depicts a projection of a multitype $\Lambda$-coalescent with starting configuration \eqref{eq:kdd} onto $\{1,\ldots,n\}$, as well as the projection of the associated process $(\mathring{\pi}_k(t))_{t \geq 0}$. 
\color{black}

\begin{figure}[h!]
\begin{tikzpicture}
\draw[dotted, gray, line width=0.2mm] (0,10.2) -- (0,0.2);
\draw[dotted, gray, line width=0.2mm] (3.3,10.2) -- (3.3,0.2);
\foreach \x in {1,2,...,10}
{
\draw[line width=1.2mm] (0,\x) -- (0.6,\x);
\node at (-0.5,\x){\x};
}
\draw[line width=1.0mm] (0.55,10) -- (0.55,8);

\draw[line width=1.2mm] (0.6,8.8) -- (1.7,8.8);

\draw[line width=1.2mm] (0.6,7) -- (1.0,7);
\draw[line width=1.2mm] (0.6,6) -- (1.0,6);
\draw[line width=1.0mm, newcol1] (0.95,5.94) -- (0.95,7.06);
\draw[line width=1.2mm, newcol1] (1.0,6.5) -- (1.3,6.5);
\draw[line width=1.2mm] (0.6,5) -- (0.8,5);
\draw[line width=1.2mm, newcol1] (0.8,5) -- (1.3,5);
\draw[line width=1.0mm, newcol1] (1.25,5) -- (1.25,6.5);
\draw[line width=1.2mm, newcol1] (1.3,5.75) -- (1.7,5.75);

\node at (3.3,7.27){$\{5,6,7,8,9,10\}$};
\draw[line width=1.0mm] (1.65,5.69) -- (1.65,8.86);
\draw[line width=1.2mm] (1.7,6.77) -- (3.3,6.77);

\draw[line width=1.2mm] (0.6,4)--(0.72,4);
\draw[line width=1.2mm, newcol2] (0.72,4) -- (1.65,4);
\draw[line width=1.2mm, newcol2] (1.6,2.94) -- (1.6,4.06);
\draw[line width=1.2mm, newcol2] (1.65,3.5) -- (3.3,3.5);
\draw[line width=1.2mm, newcol1] (1.2,3) -- (1.65,3);
\draw[line width=1.2mm] (0.6,3) -- (1.2,3);

\node at (3.3,4){\textcolor{newcol2}{$\{3,4\}$}};

\draw[line width=1.2mm] (0.55,0.94) -- (0.55,2.06);
\draw[line width=1.2mm] (0.6,1.5) -- (3.3,1.5);

\node at (3.3,2){$\{1,2\}$};

\node at (0,-0.1){$t=0$};

\node at (3.3,-0.1){$t=t_0$};


\draw[dotted, gray, line width=0.2mm] (7,10.2) -- (7,0.2);
\draw[dotted, gray, line width=0.2mm] (10.3,10.2) -- (10.3,0.2);
\foreach \x in {1,2,...,10}
{
\draw[line width=1.2mm] (7,\x) -- (7.6,\x);
\node at (6.5,\x){\x};
}
\draw[line width=1.0mm] (7.55,10) -- (7.55,8);

\draw[line width=1.2mm] (7.6,8.8) -- (10.3,8.8);

\draw[line width=1.2mm] (7.6,7) -- (8.0,7);
\draw[line width=1.2mm] (7.6,6) -- (8.0,6);
\draw[line width=1.2mm] (7.6,5) -- (7.8,5);

\draw[line width=1.0, gray] (8.0,5.94) -- (8.0,7.06);

\node at (10.3,9.3){$\{8,9,10\}$};

\draw[line width=1.2mm] (7.6,4)--(7.72,4);
\draw[line width=1.2mm] (7.6,3) -- (7.2,3);


\draw[line width=1.2mm] (7.55,0.94) -- (7.55,2.06);
\draw[line width=1.2mm] (7.6,1.5) -- (10.3,1.5);

\node at (10.3,2){$\{1,2\}$};

\node at (7,-0.1){$t=0$};

\node at (10.3,-0.1){$t=t_0$};

\end{tikzpicture}
\caption{On the left we have a projection $(\bpi(t))_{t \geq 0}$ onto $[10] := \{1,\ldots,10\}$. The type $k$ blocks are coloured in black; blocks of other types are coloured in lighter shades. At time $t_0$ we have $\pi_k(t_0)|_{10} = \{\{1,2\},\{5,6,7,8,9,10\} \}$. On the right we have the projection of the associated process $(\mathring{\pi}_k(t))_{t \geq 0}$ onto $[10]$. Here $\mathring{\pi}_k(t_0)|_{10} = \{ \{1,2\}, \{8,9,10\} \}$.}
\label{fig:ring}
\end{figure}
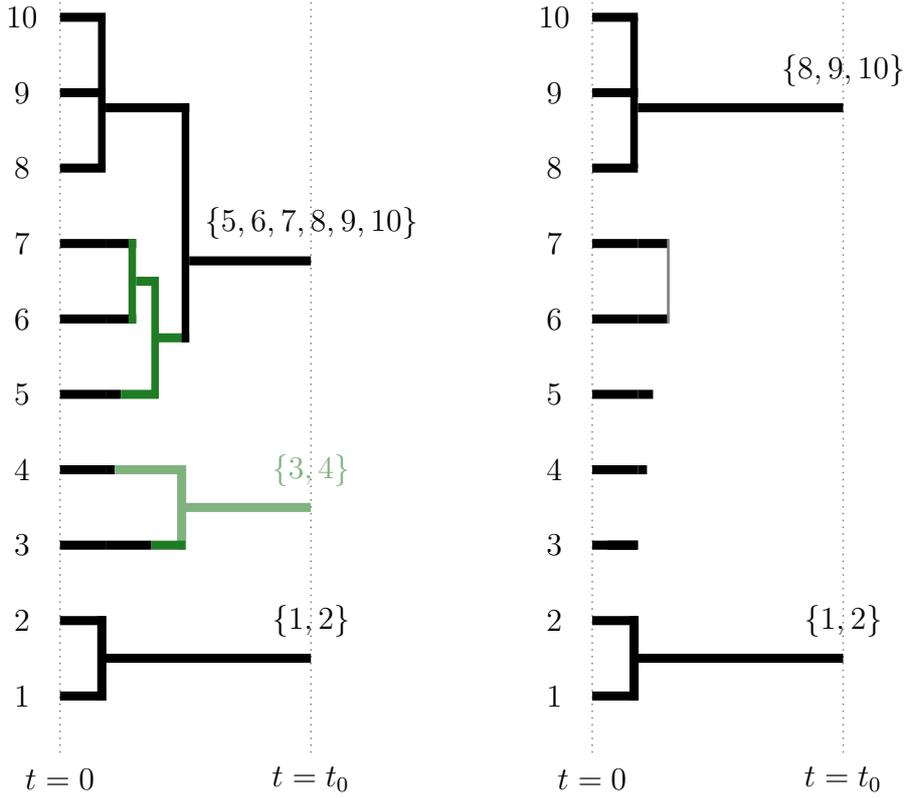

The main idea here is that $(\mathring{\pi}_k(t))_{t \geq 0}$ is in fact a Markov process in its own filtration, and has the dynamics of a single-type $\Lambda$-coalescent with killing. More specifically, the blocks of $(\mathring{\pi}_k(t))_{t \geq 0}$ merge according to the dynamics of a single-type $(\rho_{kk \to k},\bar{Q}_{\to k})$-coalescent, and are killed according to the following dynamics:
\begin{itemize}
\item Individual blocks are eliminated from the process at rate $\sum_{ i \neq k } \rho_{k \to i}$.
\item Let $W_k$ be the measure on $[0,1]$ given by the pushforward of $\sum_{ i \neq k} Q_{ \to k} (\mathrm{d} s)$ under the projection $s \in [0,1]^d \mapsto s_k \in [0,1]$. More specifically, for Borel subsets $A$ of $[0,1]$,
\begin{align*} W_k(A) := \sum_{ i \neq k} \int_{ [ 0, 1]^d} \mathrm{1} \{ s_k \in A \} Q_{ \to i }( \mathrm{d} s).
\end{align*}
For $u \in [0,1]$, at rate $W_k(\mathrm{d}u)$ a large killing event with parameter $u$ happens for the $\mathring{\pi}_k(t)$ process: namely at this event each block of $\mathring{\pi}_k(t)$ is independently killed with probability $u$, or remains alive with probability $1-u$.  The average killing rate due to this effect is given by 
\begin{align*}
\int_0^1 u W_k(\mathrm{d}u) = \sum_{ i \neq k } \int_{[0,1]^d} s_k Q_{\to i}(\mathrm{d}s).
\end{align*}
\end{itemize}
The rate at which each individual block is removed from the process is given by 
\begin{align} 
r_k := \sum_{i \neq k} \int_{[0,1]^d} s_k Q_{ \to i} ( \mathrm{d} s) + \sum_{ i \neq k } \rho_{k  \to i} < \infty,
\end{align}
where the finiteness of $r_k$ follows from the integrability condition \eqref{eq:integrability}. 

As we observed above, the process $(\mathring{\pi}_k(t))_{t \geq 0}$ comes down from infinity. We would now like to utilise this fact to show that the single-type $(\rho_{kk \to k},\bar{Q}_{\to k})$-coalescent without killing also comes down from infinity.

In this direction we have the following lemma.

\begin{lemma} \label{lem:nocares}
Let $(\kappa(t))_{t \geq 0}$ be a single-type $(\rho,Q)$ coalescent, and let $(\mathring{\kappa}(t))_{t \geq 0}$ be a version of $(\kappa(t))_{t \geq 0}$ in which blocks are killed according to the following dynamics:
\begin{itemize}
\item Individual blocks are eliminated from the process at rate $\delta > 0$.
\item Let $W$ be a measure on $[0,1]$. At rate $W(\mathrm{d}u)$ a large killing event with parameter $u$ happens for the blocks of $(\mathring{\kappa}(t))_{t \geq 0}$, at which each block is independently killed with parameter $u$, and remains alive with probability $1-u$. 
\end{itemize}
Suppose that $r := \delta + \int_0^1 u W(\mathrm{d}u ) < \infty$. Then if $\mathring{\kappa}(t))_{t \geq 0}$ comes down from infinity, so does $(\kappa(t))_{t \geq 0}$, i.e.
\begin{align*}
\mathbb{P} ( \# \mathring{\kappa}(t) < \infty ~\forall t > 0) = 1 \qquad \text{implies} \qquad \mathbb{P} ( \# \kappa(t) < \infty~ \forall t>0 ) =1.
\end{align*} 
\end{lemma}
\begin{proof}
There is a natural coupling between $(\kappa(t))_{t \geq 0}$ and $(\mathring{\kappa}(t))_{t \geq 0}$, so that for each $t$, $\mathring{\kappa}(t)$ is a subcollection of the partition $\kappa(t)$. In this coupling, for each block $\Gamma$ of $\kappa(t)$, the probability that the block is also contained in $\mathring{\kappa}(t)$ is $e^{ - r t}$. 

Let $\mathring{\kappa}(t)|_n$ and $\kappa(t)|_n$ denote the restrictions of partitions to $\{1,\ldots,n\}$, as defined in \eqref{eq:restriction}. Write 
\begin{align*}
A_n(t) := \frac{ \# \mathring{\kappa}(t)|_n }{ \# \kappa(t)|_n }.
\end{align*}
Then the expectation of $A_n(t)$ is $e^{- r t}$, and by applying Markov's inequality to the non-negative random variable $1 - A_n(t)$ it follows that $\mathbb{P}( A_n(t) < 1/2 ) \leq 2(1-e^{-rt})$. In particular, there exists a constant $C_r$ such that for all $0 < t \leq 1$ and all $n \in \mathbb{N}$ we have
\begin{align*}
\mathbb{P}( A_n(t) < 1/2 ) \leq C_r t.
\end{align*}
It follows that for all $n,m \in \mathbb{N}$ we have 
\begin{align*}
\mathbb{P}( \# \kappa(t)|_n > m ) &= \mathbb{P}( \# \kappa(t)|_n > m, \# \mathring{\kappa}(t)|_n > m/2 ) + \mathbb{P} ( \# \kappa(t)|_n > m, \# \mathring{\kappa}(t)|_n \leq m/2)\\
&\leq \mathbb{P}( \# \mathring{\kappa}(t)|_n > m/2) + C_r t.
\end{align*}
Now since $\mathbb{P}( \# \mathring{\kappa}(t) = \infty) = \lim_{m \to \infty} \lim_{n \to \infty} \mathbb{P}( \# \mathring{\kappa}|_n > m ) = 0$, we obtain
\begin{align*}
\mathbb{P}( \# \kappa(t) = \infty ) = \lim_{m \to \infty} \lim_{n \to \infty} \mathbb{P}( \# \kappa(t)|_n > m ) \leq C_r t.
\end{align*} 
Now since $\#\kappa(t)$ is almost-surely non-increasing, it follows that whenever $t_1 < t_2$, $\{ \#\kappa(t_2) = \infty \}$ implies $\{ \# \kappa(t_1) = \infty \}$. 
In particular, for any $t > 0$, it follows that $\mathbb{P}( \exists s  > 0 : \# \kappa(s) = \infty) \leq C_r t$. Thus we have 
\begin{align*}
\mathbb{P}( \exists s  > 0 : \# \kappa(s) = \infty) = 0,
\end{align*}
completing the proof.
\end{proof}
We are now equipped to prove one direction of Theorem \ref{thm:coming down}.

\begin{proof}[Proof of `only if' direction of Theorem \ref{thm:coming down}]
Let $(\bpi(t))_{t \geq 0}$ be a multitype $\Lambda$-coalescent that comes down from infinity. We saw at the beginning of this section that we can construct from $(\bpi(t))_{t \geq 0}$ a stochastic process $(\mathring{\pi}_k(t))_{t \geq 0}$ that has the law of single-type $(\rho_{kk \to k},\bar{Q}_{\to k})$-coalescent with stochastic killing at an average rate $r_k < \infty$. Since $(\bpi(t))_{t \geq 0}$ comes down from infinity, and $\mathring{\pi}_k(t)$ has at most the same number of blocks as the $k^{\text{th}}$ component $\pi_k(t)$ of $\bpi(t)$, it follows that $(\mathring{\pi}_k(t))_{t \geq 0}$ also comes down from infinity.

Since $(\mathring{\pi}_k(t))_{t \geq 0}$ has the law of a single-type $(\rho_{kk \to k}, \bar{Q}_{\to k })$-coalescent with killing at a finite expected rate $r_k < \infty$, and $(\mathring{\pi}_k(t))_{t \geq 0}$ comes down from infinity, it follows from Lemma \ref{lem:nocares} that the single-type $(\rho_{kk \to k}, \bar{Q}_{ \to k })$-coalescent without killing also comes down from infinity. 

We now appeal to the work of Bertoin and Le Gall \cite{bertoin2006stochastic} which we discussed in 
Section 1.3,
which states that if 
\begin{align*}
\psi_k(q) := \frac{ \rho_{kk \to k}}{2} q^2 + \int_0^1 e^{ - qu } - 1 + qu ~ \bar{Q}_{ \to k}( \mathrm{d} u ) 
\end{align*}
is the asymptotic processing speed associated with the $(\rho_{kk \to k}, \bar{Q}_{ \to k })$-coalescent that comes down from infinity, then 
\begin{align*}
\int_{ s}^\infty \frac{\mathrm{d} q }{ \psi_k(q)} < \infty \qquad \text{for all $s >0$}.
\end{align*}  
That completes the proof of one direction of Theorem \ref{thm:coming down}.
\end{proof}

\color{black}

\subsection{Proof of the `if' direction in Theorem \ref{thm:coming down}}

Generally speaking, multi-type $\Lambda$-coalescents exhibit complicated behaviour regarding the number of blocks. Due to blocks changing type as well as cross-type mergers, the number of blocks of a certain type may increase whilst numbers of other types are decreasing. What can be said however is that the \emph{total} number of blocks in a multi-type $\Lambda$-coalescent never increases. 

In our first lemma we characterise the average rate of decrease in the number of blocks in the multitype $\Lambda$-coalescent. Recall that we write $(N_1(t),\ldots,N_d(t))$ for the $\mathbb{Z}_{ \geq 0}^d$-valued process where $N_i(t)$ counts the number of blocks of type $i$ in the process at time $t$. 

\begin{lemma} \label{lem:dec rate}
If at some moment we have $(N_1(t),\ldots,N_d(t)) = (n_1,\ldots,n_d)$, then the expected change in the total number of blocks when there are $\mathbf{n}$ blocks is given by 
\begin{align*}
\lim_{ h \downarrow 0} h^{-1} \mathbb{E}[ \sum_{ j=1}^d (N_j(t+h) - N_j(t)) | N(t) = n ] =: - \Psi(n_1,\ldots,n_d),
\end{align*}
where $\Psi:\mathbb{R}^d \to [0,\infty)$ is given by 
\begin{align} \label{eq:Psi}
\Psi(x_1,\ldots,x_d) = \sum_{ i = 1}^d \rho_{ii \to i} \frac{ x_i(x_i-1)}{2}  + \sum_{ i = 1}^d \int_{[0,1]^d} \left\{ \sum_{ j = 1}^d x_j s_j - 1 + \prod_{ j = 1}^d ( 1 - s_j)^{x_j} \right\} Q_{\to i}(\mathrm{d}s).
\end{align}
\end{lemma}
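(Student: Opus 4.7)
The plan is to read off the expected infinitesimal change of the total count $M(t) := \sum_{j=1}^d N_j(t)$ by enumerating every transition of the $\Lambda$-coalescent from state $\mathbf{n} = (n_1,\ldots,n_d)$. The anatomisation following Definition \ref{def:main} exhibits exactly three families of transitions: (a) single-block colour changes at rate $\rho_{j \to i} n_j$, which leave $M$ unchanged and so contribute nothing; (b) same-type pairwise mergers: any specific pair of type-$i$ blocks merges at rate $\rho_{ii \to i}$, giving a total rate $\rho_{ii \to i} \binom{n_i}{2}$ with decrease one each time, producing the first sum in \eqref{eq:Psi}; and (c) large type-$i$ merger events, which form a Poisson point measure of intensity $dt \otimes Q_{\to i}(ds)$ on $\mathbb{R}_{\geq 0} \times [0,1]^d$. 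All the work is in (c).

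For (c), at an event with parameter $s \in [0,1]^d$ each of the $n_j$ extant type-$j$ blocks is independently involved with probability $s_j$, independently across blocks and across $j$. Writing $K_j \sim \text{Bin}(n_j, s_j)$ for the resulting counts and $K := \sum_j K_j$, if $K \geq 1$ the $K$ involved blocks are replaced by a single block of type $i$ (decrease $K - 1$), while if $K = 0$ nothing happens. Thus the expected reduction of $M$ at such an event with parameter $s$ is
\begin{align*}
\mathbb{E}\bigl[(K-1)\ind_{K\geq 1}\bigr] = \mathbb{E}[K] - \mathbb{P}(K\geq 1) = \sum_{j=1}^d n_j s_j - 1 + \prod_{j=1}^d (1-s_j)^{n_j},
\end{align*}
which is exactly the integrand appearing in \eqref{eq:Psi}. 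Integrating against $Q_{\to i}(ds)$ and summing over $i$ completes the identification of $-\Psi(\mathbf{n})$.

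What remains is to justify the interchange of limit and expectation, which reduces to showing $\Psi(\mathbf{n}) < \infty$. Since $(K-1)\ind_{K\geq 1} \leq \binom{K}{2}$, the integrand in (c) is dominated by $\tfrac{1}{2}\mathbb{E}[K(K-1)] = \tfrac{1}{2}\sum_j n_j(n_j-1)s_j^2 + \tfrac{1}{2}\sum_{j\neq \ell} n_j n_\ell s_j s_\ell$, which is $O(\|s\|^2)$ near the origin; combining with \eqref{eq:integrability} (using $s_j^2 \leq s_j$ for $j \neq i$ together with $s_j s_\ell \leq \tfrac{1}{2}(s_j^2 + s_\ell^2)$ to absorb the cross-terms) shows every integral converges. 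Moreover, since $\mathbb{P}(K\geq 2) \leq \mathbb{E}[\binom{K}{2}]$, the total rate of transitions that actually change $M$ is finite. Thus $M$ is a pure-jump non-increasing process whose compensator is $\int_0^t \Psi(N(s))\,ds$, and right-continuity of $N$ at $t$ delivers the quoted limit by a standard generator argument. The only mild obstacle is integrability near $s = 0$: the one-dimensional bound $\int s_i\, Q_{\to i}(ds) < \infty$ is not available, so one has to exploit the cancellation inside $\mathbb{E}[K] - \mathbb{P}(K\geq 1)$ (equivalently, the $\binom{K}{2}$ domination above) rather than bound the two terms separately.
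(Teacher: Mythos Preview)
Your proof is correct and follows essentially the same approach as the paper: decompose into colour changes (no effect), pairwise mergers (the $\binom{n_i}{2}$ term), and large mergers where the expected decrease is computed via independent binomials $K_j \sim \mathrm{Bin}(n_j,s_j)$ to yield exactly the integrand in \eqref{eq:Psi}. Your additional paragraph justifying $\Psi(\mathbf{n})<\infty$ via the domination $(K-1)\ind_{K\geq 1}\leq \binom{K}{2}$ and the integrability condition \eqref{eq:integrability} is a welcome point of rigour that the paper's proof leaves implicit.
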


\begin{proof}
We observe that single blocks changing colour have no effect on the total number of blocks. Hence it is only pairwise mergers and larger mergers which cause decreases in the total number of blocks. The former term $\sum_{ i = 1}^d \rho_{ii \to i} \binom{n_i}{2}$ in the definition for $\Psi$ is the expected size of the decrease in the number of blocks due to pairwise mergers. It remains to account for the sum $\sum_{ i = 1}^d R_i(\mathbf{n})$ of the quantities
\begin{align} \label{eq:R def}
R_i(\mathbf{n}) := \int_{[0,1]^d} \left\{ \sum_{ j = 1}^d n_j s_j - 1 + \prod_{ j = 1}^d ( 1 - s_j)^{n_j} \right\} Q_{\to i}(\mathrm{d}s).
\end{align}
To this end, suppose $\mathbf{m} = (m_1,\ldots,m_d)$ blocks are involved in a merger to form a single block of type $i$. Then at this merger the number of blocks of type $j$ decreases by $m_j$ for all $j \neq i$, and the number of type $i$ blocks decreases by $m_i - \mathrm{1} \{ \exists j : m_j > 0 \}$. In particular, the total number of blocks in the process at this merger 
decreases by 
\begin{align*}
m_1 + \ldots + m_d - \mathrm{1} \{ \exists j : m_j > 0 \},
\end{align*}
where $\mathrm{1} \{ \exists j : m_j > 0 \}$ is the indicator function that at least one of the $m_i$ is nonzero.

Now given a type $i$ merger associated with a value $s \in [0,1]^d$, when the process has $\mathbf{n}$ blocks the number of type $j$ blocks involved in the merger is binomially distributed with parameters $n_j$ and $s_j$. In particular, the result follows by noting that if $(M_1,\ldots,M_d)$ is a vector of independent binomially distributed random variables such that each $M_j$ has parameters $(n_j,s_j)$, then 
\begin{align*}
\mathbb{E} \left[  M_1 + \ldots + M_d - \mathrm{1} \{ \exists j : M_j > 0 \} \right] &= \mathbb{E} \left[  M_1 + \ldots + M_d - 1 + \mathrm{1} \{ M_j = 0 ~\forall j \} \right] \\
&=  \sum_{ j = 1}^d n_j s_j - 1 + \prod_{ j = 1}^d ( 1 - s_j)^{n_j} .
\end{align*}
\end{proof}

With a view to controlling from below the rate of decrease in the number of blocks when there are $n$ total blocks, we define the function $\Omega:[0,\infty) \to [0,\infty)$
\begin{align*}
\Omega(x) := \min_{ x_1 + \ldots + x_d = x } \Psi(x_1,\ldots,x_d),
\end{align*}
where $\Psi$ is as in Lemma \ref{lem:dec rate} and the minimum is taken over all non-negative $d$-tuples summing to $x$. 

Our next lemma will allow us to use Jensen's inequality down the line.

\begin{lemma} \label{lem:convex}
The functions $\Psi:\mathbb{R}_{ \geq 0}^d \to \mathbb{R}_{ \geq 0}$ and $\Omega:\mathbb{R}_{ \geq 0} \to \mathbb{R}_{ \geq 0}$ are convex.
\end{lemma}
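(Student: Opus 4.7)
The plan is to express $\Psi$ as a sum of manifestly convex functions of $\mathbf{x}$ and to obtain convexity of $\Omega$ from the standard fact that the infimum of a convex function over an affine slice is convex in the slice parameter.

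For the convexity of $\Psi$, I would handle the pieces of \eqref{eq:Psi} one at a time. The quadratic terms $\rho_{ii\to i}\,\frac{x_i(x_i-1)}{2}$ are convex in $x_i$ (non-negative leading coefficient), hence convex on $\mathbb{R}_{\geq 0}^d$. Inside each integral, the affine part $\sum_j x_j s_j - 1$ is trivially convex in $\mathbf{x}$ for every fixed $s$. For the product term, I would write
\[
\prod_{j=1}^d (1-s_j)^{x_j} \;=\; \exp\!\left( \sum_{j=1}^d x_j \log(1-s_j) \right),
\]
which, being the exponential of an affine function of $\mathbf{x}$, is convex in $\mathbf{x}$ for every fixed $s \in [0,1)^d$. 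On the boundary where some $s_j = 1$ this product is the pointwise limit as $s_j \uparrow 1$ (equal to $0$ for $x_j>0$ and to the corresponding smaller product for $x_j=0$), hence still convex as a pointwise limit of convex functions. Thus the integrand is convex in $\mathbf{x}$ for every $s$; integration against the non-negative measure $Q_{\to i}$ preserves convexity, the integrals being finite by \eqref{eq:integrability}, and finally summing over $i$ preserves convexity.

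For the convexity of $\Omega$, I would argue directly from the definition. Fix $x,x'\ge 0$ and $\lambda\in[0,1]$. Given $\epsilon>0$, choose $\mathbf{y},\mathbf{y}'\in\mathbb{R}_{\geq 0}^d$ with $\sum_i y_i = x$, $\sum_i y_i' = x'$, $\Psi(\mathbf{y})\le \Omega(x)+\epsilon$ and $\Psi(\mathbf{y}')\le \Omega(x')+\epsilon$. Then $\lambda\mathbf{y}+(1-\lambda)\mathbf{y}'\in\mathbb{R}_{\geq 0}^d$ has coordinates summing to $\lambda x+(1-\lambda)x'$, so using convexity of $\Psi$,
\[
\Omega(\lambda x+(1-\lambda)x') \;\le\; \Psi(\lambda\mathbf{y}+(1-\lambda)\mathbf{y}') \;\le\; \lambda\Psi(\mathbf{y})+(1-\lambda)\Psi(\mathbf{y}') \;\le\; \lambda\Omega(x)+(1-\lambda)\Omega(x')+\epsilon,
\]
and letting $\epsilon\to 0$ gives the required inequality.

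The only real bookkeeping point is the verification that $\mathbf{x}\mapsto\prod_j(1-s_j)^{x_j}$ is convex for every $s\in[0,1]^d$ (the boundary $s_j=1$ being the only place one needs to be slightly careful); once that is in hand, everything else is routine, so I do not anticipate a genuine obstacle.
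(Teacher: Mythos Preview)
Your proposal is correct and follows essentially the same route as the paper: showing the integrand in \eqref{eq:Psi} is convex in $\mathbf{x}$ for each fixed $s$ (you supply the reason --- exponential of an affine function --- which the paper leaves implicit) and then deducing convexity of $\Omega$ by the standard ``infimum over an affine slice'' argument. Your $\epsilon$-near-minimiser version of the latter is in fact slightly more careful than the paper's, which writes equalities where only $\leq$ is needed and tacitly assumes the minimum is attained.
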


\begin{proof}
We consider $\Psi$ first. We begin by noting that for each $s$ in $[0,1]^d$, the function 
\begin{align*}
x \mapsto \sum_{ j = 1}^d x_j s _j  - 1 + \prod_{ j = 1}^d (1-s_j )^{x_j} 
\end{align*}
is a convex function from $\mathbb{R}_{ \geq 0}^d$ to $\mathbb{R}_{ \geq 0}$, and hence each integral $R_i(x_1,\ldots,x_d)$ as in \eqref{eq:R def} is also convex. The convexity of $\Psi$ now follows from the fact that the sum of convex functions is convex. 

Turning our attention to $\Omega$, we note from explicit calculation that for real $x,y$ and $\lambda \in [0,1]$ we have 
\begin{align*}
\Omega(\lambda x + (1- \lambda)y) &:= \min_{ z_1 + \ldots + z_d = \lambda x + (1 - \lambda) y } \Psi(z_1,\ldots,z_d)\\
& = \min_{ x_1 + \ldots + x_d = x, y_1 + \ldots + y_d = y } \Psi \left( \lambda x_1 + (1 - \lambda) y_1,\ldots, \lambda x_d + (1 - \lambda) y _d \right) \\
&\leq \min_{ x_1 + \ldots + x_d = x, y_1 + \ldots + y_d = y } \left\{ \lambda \Psi(x_1,\ldots,x_d) + (1 - \lambda) \Psi(y_1,\ldots,y_d) \right \}\\
&= \lambda \Omega(x) + (1 - \lambda) \Omega(y),
\end{align*}
which establishes the convexity of $\Omega$. 

\end{proof}

Recall in the introduction (equation \eqref{eq:proj processing}) we defined the projected type $i$ processing speed $\psi_i : \mathbb{R}_{ \geq 0} \to \mathbb{R}_{ \geq 0}$ to be the function
\begin{align*}
\psi_i(x) := \rho_{ ii \to i } \frac{x(x-1)}{2}+ \int_{[0,1]^d} xs_i - 1 + e^{ - x s_i}  Q_{\to i }( \mathrm{d} s ).
\end{align*}
We will also require the non-asymptotic variant 
\begin{align*}
\tilde{\psi}_i(x) := \rho_{ ii \to i } \frac{x(x-1)}{2}+ \int_{[0,1]^d} xs_i - 1 + (1-s_i)^{x}   Q_{\to i }( \mathrm{d} s ).
\end{align*}
The following lemma states that integrability conditions for $\psi_i$ and $\tilde{\psi}_i$ are equivalent.

\begin{lemma} \label{lem:both finite}
We have
\begin{align*}
\int_s^\infty \frac{\mathrm{d}q}{ \psi_i(q) } < \infty \iff \int_s^\infty \frac{\mathrm{d}q}{ \tilde{\psi}_i(q) } < \infty 
\end{align*}
\end{lemma}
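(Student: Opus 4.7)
The plan is to compare $\psi_i$ and $\tilde\psi_i$ pointwise, extracting a uniform estimate on their difference that is sufficient to transfer integrability at infinity in both directions.

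For the easy direction, the elementary inequality $(1-s)^q \leq e^{-qs}$ on $s \in [0,1]$, $q \geq 0$ (equivalent to $\log(1-s) \leq -s$) gives $\tilde\psi_i(q) \leq \psi_i(q)$ for all $q \geq 0$. Hence $1/\psi_i \leq 1/\tilde\psi_i$, and finiteness of $\int_s^\infty \mathrm{d}q/\tilde\psi_i(q)$ immediately implies finiteness of $\int_s^\infty \mathrm{d}q/\psi_i(q)$.

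For the converse, I would bound the difference $\psi_i(q) - \tilde\psi_i(q) = \int_{[0,1]^d} (e^{-qs_i} - (1-s_i)^q) Q_{\to i}(\mathrm{d}s)$ by something linear in $q$. Writing $e^{-qs} - (1-s)^q = e^{-qs}(1 - e^{-qf(s)})$ with $f(s) = -s - \log(1-s) = \sum_{k \geq 2} s^k/k$, the elementary bound $f(s) \leq s^2/(2(1-s))$ yields $f(s) \leq s^2$ on $s \in [0,1/2]$. Combined with $1 - e^{-x} \leq x$, this gives $e^{-qs} - (1-s)^q \leq qs^2$ on $s_i \in [0,1/2]$; on the complementary range $s_i \in (1/2,1]$ the bound $e^{-qs_i} \leq e^{-q/2}$ and the estimate $Q_{\to i}(\{s_i>1/2\}) \leq 4\int s_i^2 Q_{\to i}(\mathrm{d}s) < \infty$ (which is finite by the integrability hypothesis \eqref{eq:integrability}) produce an exponentially decaying contribution. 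Combining, $\psi_i(q) - \tilde\psi_i(q) \leq Cq$ for all $q \geq 1$, where $C$ depends only on $\int s_i^2 Q_{\to i}(\mathrm{d}s)$.

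To close the argument I would verify that whenever $\int^\infty \mathrm{d}q/\psi_i(q) < \infty$ one automatically has $\psi_i(q)/q \to \infty$: differentiating the convex function $\psi_i$ gives $\psi_i'(q) = \rho_{ii\to i}(q-1/2) + \int s_i(1-e^{-qs_i}) Q_{\to i}(\mathrm{d}s)$, which converges monotonically in $[0,\infty]$; if this limit were finite one would need $\rho_{ii \to i}=0$ and $\int s_i Q_{\to i}(\mathrm{d}s) < \infty$, forcing $\psi_i(q) = O(q)$ and contradicting the assumed integrability. Hence $\psi_i(q) \geq 2Cq$ past some threshold $q_0$, so $\tilde\psi_i(q) \geq \psi_i(q)/2$ for $q \geq q_0$, giving $\int^\infty \mathrm{d}q/\tilde\psi_i(q) \leq 2\int^\infty \mathrm{d}q/\psi_i(q) < \infty$. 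The main subtlety here is that \eqref{eq:integrability} furnishes $\int s_i^2 Q_{\to i} < \infty$ but not $\int s_i Q_{\to i} < \infty$, which obstructs the tempting uniform bound $qs_i^2 e^{-qs_i} \leq s_i/e$; settling for linear growth of $\psi_i - \tilde\psi_i$ in $q$ is what makes the superlinearity step genuinely necessary rather than cosmetic.
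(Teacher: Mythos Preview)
Your proof is correct, and the easy direction coincides with the paper's. For the harder implication, however, the paper takes a shorter and more direct route: it establishes the \emph{uniform} multiplicative comparison $2\tilde\psi_i(q)\geq\psi_i(q)$ for all $q\geq 2$, by proving the pointwise inequality
\[
2\bigl((1-s)^q-1+qs\bigr)\;\geq\;e^{-qs}-1+qs,
\]
essentially via the monotonicity of $(1-u/q)^q$ in $q$ (substituting $u=qs$ and comparing with $q=2$). This gives a universal constant and threshold, independent of $Q_{\to i}$, and avoids any appeal to the hypothesis $\int^\infty \mathrm{d}q/\psi_i(q)<\infty$.

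Your argument instead controls the \emph{additive} gap $\psi_i-\tilde\psi_i\leq Cq$ and then invokes the hypothesis to force superlinearity $\psi_i(q)/q\to\infty$, whence $\tilde\psi_i\geq\psi_i/2$ beyond some $q_0$ depending on $Q_{\to i}$. This is a legitimate and standard ``negligible perturbation'' pattern, and the superlinearity step you isolate is a useful structural fact in its own right; the trade-off is that the threshold is non-explicit and the route is longer. The paper's approach is slicker here, but yours would transfer more readily to other perturbations of $\psi_i$ that are merely $O(q)$.
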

\begin{proof}
Since $(1-s_i)^{q} \leq e^{ - q s_i }$ for all $s_i \in [0,1], q> 0$ we clearly have $\psi_i(q) \geq \tilde{\psi}_i(q)$, so that 
\begin{align*}
 \int_s^\infty \frac{\mathrm{d}q}{ \tilde{\psi}_i(q) } < \infty \implies \int_s^\infty \frac{\mathrm{d}q}{ \psi_i(q) } < \infty .
\end{align*}
On the other hand, we now claim that for every $q \geq 2$, we have $2 \tilde{\psi}_i(q) \geq \psi_i(q)$, which is sufficient to prove the converse claim. Indeed, since $(1 - u/q)^q$ is an increasing function of $q$, setting $ u = qs$ for all $q \geq 2$ we have 
\begin{align*}
2 \left( (1-s)^q - 1 + qs \right) - \left( e^{ - qs } -1  + qs \right) &= 2 \left( (1 - u/q)^q - 1 + u \right) - \left( e^{ - u} - 1 + u \right) \\
&\geq  2 \left( (1 - u/2)^2 - 1 + u \right) - \left( e^{ - u} - 1 + u \right)\\
&\geq 2 \frac{u^2}{4} - u^2/2 = 0,
\end{align*} 
where to obtain the final inequality above we used the fact that $e^{-u} - 1 + u \leq u^2/2$. It follows that whenever $q \geq 2$ we have the inequality
\begin{align*}
2 \left( (1-s)^q - 1 + qs \right)  \geq e^{ - qs} -1  + qs.
\end{align*}
Integrating against $Q_{\to i}(  \mathrm{d}s)$ leads to $2 \tilde{\psi}_i(q) \geq \psi_i(q)$, and in particular 
\begin{align*}
\int_s^\infty \frac{\mathrm{d}q}{ \psi_i(q) } < \infty \implies \int_s^\infty \frac{\mathrm{d}q}{ \tilde{\psi}_i(q) } < \infty ,
\end{align*}
completing the proof.
\end{proof}

Our next lemma bounds the total rate of decrease in the number of blocks in terms of the $\psi_i$. 

\begin{lemma} \label{lem:1lower}
We have
\begin{align*}
\Psi(x_1,\ldots,x_d) \geq \sum_{ j = 1}^d \tilde{\psi}_j(x_j).
\end{align*}
\end{lemma}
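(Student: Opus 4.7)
My plan is to reduce the lemma to a pointwise inequality on integrands, and then dispatch that inequality by a union bound. Expand both sides: the Kingman pieces $\rho_{jj\to j}\tfrac{x_j(x_j-1)}{2}$ appear verbatim in $\Psi(x_1,\ldots,x_d)$ and in $\sum_j \tilde\psi_j(x_j)$, so they cancel. It therefore suffices to compare the big-merger contributions. Writing
\[
 F(s,x) := \sum_{k=1}^d x_k s_k - 1 + \prod_{k=1}^d (1-s_k)^{x_k},
 \qquad
 G_i(s,x) := x_i s_i - 1 + (1-s_i)^{x_i},
\]
the sum on the right-hand side (after renaming the summation index $j \leftrightarrow i$) runs over exactly the same family of measures $Q_{\to 1},\ldots,Q_{\to d}$ as the one on the left, so that the target inequality becomes
\[
 \sum_{i=1}^d \int_{[0,1]^d} \bigl[F(s,x) - G_i(s,x)\bigr]\, Q_{\to i}(\mathrm{d}s) \;\geq\; 0.
\]
Thus it is enough to establish the stronger pointwise estimate $F(s,x) \geq G_i(s,x)$ for every $i$ and every $s\in[0,1]^d$.

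A brief algebraic manipulation gives
\[
 F(s,x) - G_i(s,x) \;=\; \sum_{k\neq i} x_k s_k \;-\; (1-s_i)^{x_i}\Bigl[\,1 - \prod_{k\neq i}(1-s_k)^{x_k}\Bigr].
\]
Since $(1-s_i)^{x_i}\in[0,1]$, this is non-negative as soon as $\sum_{k\neq i} x_k s_k \geq 1 - \prod_{k\neq i}(1-s_k)^{x_k}$. The plan is to prove this last inequality by a union-bound/probabilistic interpretation: for integer $x_k$, think of $\sum_{k\neq i} x_k$ independent Bernoulli trials in which $x_k$ of them have success probability $s_k$; the right-hand side is the probability that at least one trial succeeds, while the left-hand side is the expected number of successes, and the inequality is the standard union bound. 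For real $x_k \geq 1$ one argues instead by Bernoulli's inequality $(1-s_k)^{x_k} \geq 1 - x_k s_k$ combined with the elementary estimate $\prod_k (1-a_k) \geq 1 - \sum_k a_k$ for $a_k \in [0,1]$.

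Integrating the pointwise bound against $Q_{\to i}$ and summing over $i$ then gives the lemma; the integrability condition \eqref{eq:integrability} ensures all integrals make sense (the $\sum_{k\neq i} x_k s_k$ term is controlled by $\int \sum_{k\neq i} s_k\, Q_{\to i}(\mathrm{d}s) < \infty$, and the product term is bounded by $1$). The only real subtlety will be the bookkeeping in the reduction step: one must verify carefully that after re-indexing, each $Q_{\to i}$ appears on both sides paired with the correct integrand, so that the $d$ measures match up. The union-bound estimate itself is elementary, which is what makes this lemma a clean bridge between the multitype processing speed $\Psi$ and the single-type quantities $\tilde\psi_i$ needed to reduce the ``if'' direction of Theorem~\ref{thm:coming down} to the Bertoin--Le Gall criterion.
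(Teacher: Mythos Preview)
Your proposal is correct and is essentially the paper's own proof: both reduce to the pointwise estimate $F(s,x)\geq G_i(s,x)$, expand the difference as $\sum_{k\neq i}x_ks_k-(1-s_i)^{x_i}\bigl[1-\prod_{k\neq i}(1-s_k)^{x_k}\bigr]$, and finish via the union-bound/Bernoulli inequality $1-\prod_{k\neq i}(1-s_k)^{x_k}\leq\sum_{k\neq i}x_ks_k$. Your remark that Bernoulli requires $x_k\geq1$ is a valid caveat that the paper glosses over (the application only needs integer block counts, so it is harmless there).
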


\begin{proof}
First we note that for each $i$ we have 
\begin{align*}
&\left\{ \sum_{ j = 1}^d x_j s_j - 1 + \prod_{ j = 1}^d ( 1 - s_j)^{x_j} \right\} - \left\{ x_i s_i - 1 + (1 - s_i)^{x_i} \right\}\\
&= \sum_{ j \neq i } x_j s_j  - (1 - s_i)^{x_i} \left\{ 1 - \prod_{ j \neq i} (1 - s_j)^{x_j } \right\}\\
& \geq \sum_{ j \neq i } x_j s_j  - (1 - s_i)^{x_i} \sum_{ j \neq i } x_j s_j\\
& \geq 0.
\end{align*}
In particular we have
\begin{align*}
\int_{[0,1]^d} \left\{ \sum_{ j = 1}^d x_j s_j - 1 + \prod_{ j = 1}^d ( 1 - s_j)^{x_j} \right\} Q_{\to i } (\mathrm{d} s) \geq \int_{[0,1]^d} \left\{ x_i s_i - 1 + (1 - s_i)^{x_i} \right\} Q_{\to i}( \mathrm{d} s).
\end{align*} 
Summing over $i$, we see that $\Psi(x_1,\ldots,x_d) \geq \sum_{ j = 1}^d \tilde{\psi}_j(x_j)$.
\end{proof}

Our next lemma states that if each $\psi_i$ satsfies the Bertoin-Le Gall \cite{bertoin2006stochastic} integrability condition, so does $\Omega$.

\begin{lemma} \label{lem:integrability lemma}
Suppose for each $i = 1,\ldots,d$ and for all $s > 0$ we have $\int_s^\infty \frac{ \mathrm{d} q}{ \psi_i(q) } < \infty$. Then for all $s > 0$ we have $\int_s^\infty \frac{ \mathrm{d} q }{ \Omega(q) } < \infty.
$
\end{lemma}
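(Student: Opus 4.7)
The plan is to bound $\Omega$ below, for large $q$, by a one-dimensional quantity built from the projected speeds $\tilde{\psi}_i(q/d)$, and then invoke Lemma~\ref{lem:both finite} to pass back to $\psi_i$. The starting point is Lemma~\ref{lem:1lower}, which gives
\[
\Omega(q) \;\geq\; \min_{\substack{x_1+\cdots+x_d=q \\ x_j\geq 0}} \sum_{j=1}^d \tilde{\psi}_j(x_j).
\]
The analytic inputs I need about each $\tilde{\psi}_j$ are the following: $\tilde{\psi}_j$ is convex (the integrand $x s_j-1+(1-s_j)^x$ is convex in $x$, exactly as in the proof of Lemma~\ref{lem:convex}); one has $\tilde{\psi}_j(0)=\tilde{\psi}_j(1)=0$; Bernoulli's inequality $(1-s)^x+xs\geq 1$ for $x\geq 1$ shows $\tilde{\psi}_j\geq 0$ on $[1,\infty)$; and convexity with both endpoints zero forces $\tilde{\psi}_j\leq 0$ on $[0,1]$, bounded below by some finite $-C_j$. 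Set $C:=\sum_j C_j$, so $\tilde{\psi}_j(x)\geq -C_j$ for every $x\geq 0$.

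Given any admissible $(x_1,\ldots,x_d)$ with $\sum x_j=q\geq d$, pigeonhole produces $j_0$ with $x_{j_0}\geq q/d\geq 1$. Since $\tilde{\psi}_{j_0}$ is convex with $\tilde{\psi}_{j_0}(0)=0$, the ratio $\tilde{\psi}_{j_0}(x)/x$ is non-decreasing on $(0,\infty)$, and using $\tilde{\psi}_{j_0}(q/d)\geq 0$ one obtains
\[
\tilde{\psi}_{j_0}(x_{j_0})\;\geq\; \frac{x_{j_0}}{q/d}\,\tilde{\psi}_{j_0}(q/d)\;\geq\;\tilde{\psi}_{j_0}(q/d)\;\geq\;\min_i \tilde{\psi}_i(q/d).
\]
Combining with $\sum_{j\neq j_0}\tilde{\psi}_j(x_j)\geq -C$ I conclude
\[
\Omega(q)\;\geq\;\min_i \tilde{\psi}_i(q/d)-C\qquad (q\geq d).
\]

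Lemma~\ref{lem:both finite} converts the hypothesis $\int^\infty dq/\psi_i(q)<\infty$ into $\int^\infty du/\tilde{\psi}_i(u)<\infty$. Together with convexity and nonnegativity of $\tilde{\psi}_i$ on $[1,\infty)$ this forces $\tilde{\psi}_i(u)\to\infty$ as $u\to\infty$, so I can fix $q_*\geq d$ with $\min_i\tilde{\psi}_i(q/d)\geq 2C$ for all $q\geq q_*$, whence $\Omega(q)\geq\tfrac12\min_i\tilde{\psi}_i(q/d)$ on $[q_*,\infty)$. Using $1/\min_i a_i \leq \sum_i 1/a_i$ for positive $a_i$ and the substitution $u=q/d$,
\[
\int_{q_*}^\infty \frac{dq}{\Omega(q)} \;\leq\; 2\sum_{i=1}^d\int_{q_*}^\infty \frac{dq}{\tilde{\psi}_i(q/d)} \;=\; 2d\sum_{i=1}^d \int_{q_*/d}^\infty \frac{du}{\tilde{\psi}_i(u)} \;<\;\infty.
\]
For $s\in(0,q_*)$, the remaining piece $\int_s^{q_*}dq/\Omega(q)$ is finite by continuity and positivity of $\Omega$ on compact subsets of $(0,\infty)$.

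The main obstacle is absorbing the correction term $-C$ arising from the potential negativity of $\tilde{\psi}_j$ on $[0,1]$: this is precisely where the Bertoin--Le Gall hypothesis enters, by guaranteeing $\tilde{\psi}_i(u)\to\infty$ and hence that the constant $C$ is eventually dominated. A minor secondary technicality is confirming positivity of $\Omega$ on bounded subintervals of $(0,\infty)$; this does not affect the coming-down-from-infinity application for which only tail integrability is needed.
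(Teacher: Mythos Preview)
Your argument follows the same skeleton as the paper's: invoke Lemma~\ref{lem:1lower} to reduce to $\sum_j \tilde{\psi}_j(x_j)$, use pigeonhole to isolate a coordinate $x_{j_0}\geq q/d$, bound $1/\min_i \leq \sum_i 1/$, and finish with Lemma~\ref{lem:both finite} and a change of variables. The one substantive difference is that you are more careful at the pigeonhole step. The paper passes directly from ``some $x_{j_0}\geq x/d$'' to the bound $\Omega(x)\geq \min_i\tilde{\psi}_i(x/d)$, which tacitly assumes both that the remaining summands $\tilde{\psi}_j(x_j)$ are nonnegative and that $\tilde{\psi}_{j_0}$ is monotone on $[x/d,x_{j_0}]$; neither holds when some $x_j\in(0,1)$, since $\tilde{\psi}_j\leq 0$ there. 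Your introduction of the constant $C=\sum_j C_j$ to absorb this negativity, together with the observation that the hypothesis forces $\tilde{\psi}_i(u)\to\infty$ (so that $C$ is eventually dominated), is exactly what is needed to make that step rigorous. The residual issue you flag about positivity of $\Omega$ on bounded intervals is shared with the paper and, as you note, is immaterial for the coming-down-from-infinity application.
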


\begin{proof}
By Lemma \ref{lem:both finite}, $\int_s^\infty \frac{\mathrm{d}q}{\psi_i(q)} < \infty$ for every $s >0$ implies $\int_s^\infty \frac{\mathrm{d}q}{ \tilde{\psi}_i(q) } < \infty $ for every $s > 0$. 

On the other hand, by Lemma \ref{lem:1lower} we have 
\begin{align} \label{eq:orange}
\Omega(x) := \min_{ x_1 + \ldots + x_d=x  } \Psi(x_1,\ldots,x_d) \geq \min_{ x_1 + \ldots + x_d=x } \sum_{ j = 1}^d \tilde{\psi}_j(x_j) .
\end{align}
Now suppose $x_1 + \ldots + x_d = x$ for non-negative $x_i$. Then at least one of the $x_i$ is greater than $x/d$. In particular from \eqref{eq:orange} we have the crude lower bound
\begin{align*}
\Omega(x) \geq \min_{ i = 1,\ldots,d} \tilde{\psi}_i(x/d).
\end{align*}
It follows that
\begin{align*}
\frac{1}{ \Omega(x) } \leq \frac{1}{ \min_{ i = 1,\ldots,d} \tilde{\psi}_i(x/d)}  = \max_{ i = 1,\ldots,d} \frac{1}{ \tilde{\psi}_i(x/d) } \leq \sum_{ i = 1}^d \frac{1}{ \tilde{\psi}_i(x/d) }.
\end{align*}
It follows that
\begin{align*}
\int_s^\infty \frac{ \mathrm{d} q }{ \Omega(q) } \leq \sum_{ i = 1}^d \int_s^\infty \frac{\mathrm{d} q }{ \tilde{\psi}_i(q/d) } = d \sum_{ i = 1}^d \int_{s/d}^\infty \frac{ \mathrm{d}q}{ \tilde{\psi}_i(q) } < \infty.
\end{align*}
\end{proof}
Our next lemma controls the expected number of blocks in terms of the solution to an ordinary differential equation.

\begin{lemma} \label{lem:control}
For $\mathbf{n} \in \mathbb{Z}_{ \geq 0}^d$, let $v_{\mathbf{n}}(t) := \mathbb{E}_{\mathbf{n}} \left(  N_1(t) + \ldots + N_d(t) \right)$ denote the expected number of blocks in a multitype $\Lambda$-coalescent starting with $n_i$ blocks of type $i$. Then
\begin{align*}
v_{\mathbf{n}}(t) \leq w_n(t),
\end{align*} 
where for $n = n_1 + \ldots + n_d$, $w_n(t)$ is the solution to the integral equation
\begin{align*}
w_n(t) = n - \int_0^t \Omega( w_n(s) ) \mathrm{d} s.
\end{align*}
\end{lemma}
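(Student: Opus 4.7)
The plan is to turn the stated inequality into a Gronwall-type comparison driven by the convexity of $\Omega$. First I would establish an integral representation of $v_{\mathbf{n}}$. Since the total number of blocks is pathwise non-increasing and bounded above by $n$, the chain $(N_1(t), \ldots, N_d(t))$ does not explode and all relevant expectations are finite. Kolmogorov's forward equation applied to the bounded function $f(n_1, \ldots, n_d) = n_1 + \ldots + n_d$, together with the computation of its generator carried out in Lemma \ref{lem:dec rate}, yields
\begin{align*}
v_{\mathbf{n}}(t) = n - \int_0^t \mathbb{E}_{\mathbf{n}}[\Psi(N_1(s), \ldots, N_d(s))] \, \mathrm{d} s.
\end{align*}

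Next, I would combine two simple bounds to obtain an integral subsolution inequality. By the very definition of $\Omega$ one has the pointwise inequality $\Psi(N_1(s), \ldots, N_d(s)) \geq \Omega(N_1(s) + \ldots + N_d(s))$, and Jensen's inequality applied to the convex function $\Omega$ (Lemma \ref{lem:convex}) gives
\begin{align*}
\mathbb{E}_{\mathbf{n}}[\Omega(N_1(s) + \ldots + N_d(s))] \geq \Omega(\mathbb{E}_{\mathbf{n}}[N_1(s) + \ldots + N_d(s)]) = \Omega(v_{\mathbf{n}}(s)).
\end{align*}
Combining these with the integral identity above yields the subsolution property
\begin{align*}
v_{\mathbf{n}}(t) \leq n - \int_0^t \Omega(v_{\mathbf{n}}(s)) \, \mathrm{d} s.
\end{align*}

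Finally, a standard perturbation argument promotes this to the bound $v_{\mathbf{n}}(t) \leq w_n(t)$. For $\epsilon > 0$, let $w_\epsilon$ solve $w_\epsilon'(t) = -\Omega(w_\epsilon(t)) + \epsilon$ with $w_\epsilon(0) = n$; such a solution exists by Peano's theorem since $\Omega$ is continuous (being convex on $[0, \infty)$). At any putative first contact time $\tau := \inf\{t > 0 : v_{\mathbf{n}}(t) \geq w_\epsilon(t)\}$, continuity forces $v_{\mathbf{n}}(\tau) = w_\epsilon(\tau)$ and the definition of $\tau$ forces the right-derivatives to satisfy $v_{\mathbf{n}}'(\tau^+) \geq w_\epsilon'(\tau)$; but the subsolution inequality and the equality just established yield $v_{\mathbf{n}}'(\tau) \leq -\Omega(v_{\mathbf{n}}(\tau)) = -\Omega(w_\epsilon(\tau)) < w_\epsilon'(\tau)$, a contradiction. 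Hence $v_{\mathbf{n}}(t) < w_\epsilon(t)$ for all $t > 0$, and sending $\epsilon \downarrow 0$ together with continuous dependence of the ODE $w' = -\Omega(w)$ on its right-hand side yields the lemma. The main obstacle I anticipate is the potential lack of Lipschitz regularity of $\Omega$ near zero, but the strictly positive perturbation $\epsilon$ sidesteps this, as the comparison argument uses only continuity of $\Omega$ and the strict gap created by the perturbation.
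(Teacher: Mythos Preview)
Your proof is correct and follows essentially the same route as the paper: derive the integral identity for $v_{\mathbf n}$ from Lemma~\ref{lem:dec rate}, use convexity (Lemma~\ref{lem:convex}) together with the definition of $\Omega$ to obtain the subsolution inequality $v_{\mathbf n}(t)\le n-\int_0^t\Omega(v_{\mathbf n}(s))\,\mathrm ds$, and conclude by ODE comparison. The only noteworthy difference is the order of the two middle steps: the paper applies Jensen to the multivariate convex function $\Psi$ and then passes to $\Omega$ via its defining minimum, whereas you first bound $\Psi$ pointwise below by $\Omega$ of the sum and then apply Jensen to the one-variable convex function $\Omega$. Both orderings land on the same integral inequality; your version has the mild advantage that it only uses convexity of $\Omega$. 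One small slip: in your perturbation argument the relevant derivative comparison at the first contact time $\tau$ is via \emph{left} derivatives (since $v_{\mathbf n}<w_\epsilon$ on $(0,\tau)$ and equality at $\tau$ forces $v_{\mathbf n}'(\tau)\ge w_\epsilon'(\tau)$ from the left), not right derivatives as you wrote; once corrected the contradiction $v_{\mathbf n}'(\tau)\le-\Omega(v_{\mathbf n}(\tau))=w_\epsilon'(\tau)-\epsilon$ goes through, since $v_{\mathbf n}$ is $C^1$ here.
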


\begin{proof}
Using Lemma \ref{lem:dec rate} to obtain the equality below we have
\begin{align*}
v_{\mathbf{n}}(t) &= n - \int_0^t \mathbb{E} \left[ \Psi(N_1(s),\ldots,N_d(s) ) \right] \mathrm{d}s\\
&\leq n - \int_0^t \Psi\left( \mathbb{E} [ N_1(s)] ,\ldots, \mathbb{E}[N_d(s)] \right) \mathrm{d}s\\
&\leq n - \int_0^t \Omega \left( v_\mathbf{n}(s) \right) \mathrm{d} s
\end{align*}
where the second inequality above follows from using Jensen's inequality and the convexity of $\Psi$ (Lemma \ref{lem:convex}), and the third inequality above follows from the definition of $\Omega$. 

The result now follows from an ODE comparison argument. 
\end{proof}

We are now equipped to prove the more difficult direction of Theorem \ref{thm:coming down}. 

\begin{proof}[Proof of `if' direction of Theorem \ref{thm:coming down}]
Suppose that $\int_{s}^\infty \frac{ \mathrm{d} q}{ \psi_i(q)} < \infty$ for each $i$. Then by Lemma \ref{lem:integrability lemma} it follows that $\int_s^\infty \frac{ \mathrm{d} q }{ \Omega(q) } < \infty$. We now note for each $t > 0$ that $w_n(t)$ defined in \eqref{lem:control} may be written as the unique solution to the equation
\begin{align*}
t = \int_{w_n(t)}^\infty \frac{ \mathrm{d} q}{ \Omega(q) }.
\end{align*}
In particular, for each $t > 0$ we may clearly define $w_\infty(t)$ to be the unique solution to the equation 
\begin{align*}
t = \int_{w_n(t)}^\infty \frac{ \mathrm{d} q}{ \Omega(q) }.
\end{align*}
Plainly for each $t$ we have $w_n(t) \leq w_{n+1}(t) \leq ... \leq w_\infty(t)$. 
In particular, by Lemma \ref{lem:control}, for each $t$ and every $\mathbf{n} \in \mathbb{Z}_{ \geq 0}^d$ we have
\begin{align*}
\mathbb{E}_\mathbf{n} \left[ N_1(t) + \ldots + N_d(t) \right] \leq w_\infty(t) < \infty.
\end{align*}
It follows from the monotone convergence theorem that the multitype $\Lambda$-coalescent comes down from infinity. 
\end{proof}

\section*{Acknowledgements}

This research was supported by the EPSRC funded Project EP/S036202/1 \emph{Random fragmentation-coalescence processes out of equilibrium}.

\bibliographystyle{acm}
\bibliography{2022.03.06jkr}

\end{document}